\numberwithin{equation}{section} 
\theoremstyle{definition}
	\newtheorem{definition}{Definition} 
	\newtheorem*{definition*}{Definition}
	\numberwithin{definition}{section}
\theoremstyle{plain}
	\newtheorem{lemma}[definition]{Lemma}
	\newtheorem{proposition}[definition]{Proposition}
	\newtheorem{theorem}[definition]{Theorem}
	\newtheorem*{theorem*}{Theorem}
	\newtheorem*{claim*}{Claim}
\theoremstyle{remark}
\newcommand{\cB}{\mathcal{B}}
\newcommand{\cC}{\mathcal{C}}
\newcommand{\bE}{\mathbb{E}}
\newcommand{\bP}{\mathbb{P}}
\newcommand{\bR}{\mathbb{R}}
\newcommand{\bZ}{\mathbb{Z}}
\newcommand{\bN}{\mathbb{N}}
\newcommand{\R}{\mathbb{R}}
\newcommand{\gom}{\mathfrak{m}}
\newcommand{\onto}{\xymatrix{\ar@{>>}[r]&}}
\newcommand{\da}[4]{\xymatrix{#1 \ar@<.5ex>[r]^{#2} \ar@<-.5ex>[r]_{#3} & #4}}
\newcommand{\tphi}{\widetilde{\phi}}
\newcommand{\hphi}{\widehat{\phi}}
\begin{document}
\title[Limit theorems for quasimorphisms]{Biharmonic functions on groups and limit theorems for quasimorphisms along random walks}
\author{Michael Bj\"orklund and Tobias Hartnick}
\address{Einstein Institute of Mathematics \\
Edmond J. Safra Campus \\
91904 Jerusalem \\
Israel}
\address{ETH Z\"urich \\
Departement Mathematik \\
R\"amistrasse 101 \\
8092 Zurich \\
Switzerland}
\email{michael.bjoerklund@math.ethz.ch, tobias.hartnick@math.ethz.ch}
\subjclass[2010]{20P05, 60F05}
\keywords{Quasimorphisms, central limit theorems, harmonic functions}
\begin{abstract}
We show for very general classes of measures on locally compact second countable groups that every Borel measurable quasimorphism is at bounded distance from a quasi-biharmonic one. This allows us to deduce non-degenerate central limit theorems and laws of the iterated logarithm for such quasimorphisms along regular random walks on topological groups using classical martingale limit theorems of Billingsley and Stout. For quasi-biharmonic quasimorphism on countable groups we also obtain integral representations using martingale convergence.
\end{abstract}
\maketitle

\section{Introduction}
\subsection{Preliminaries}
Throughout this article we denote by $G$ a topological group and by $\mu$ a regular probability measure on $G$. We then refer to the pair $(G,\mu)$ as a \emph{measured group}. A real-valued function $\phi$ on $G$ is called a \emph{quasimorphism} if the differential 
\[
\partial \phi(g,h) := \phi(gh) - \phi(g) - \phi(h),
\]
is bounded on $G \times G$. In this case we denote by $D(\phi) := \sup_{g,h} | \partial \phi(g,h) |$ 
the \emph{defect} of $\phi$.

In this article we will be interested in the behavior of quasimorphisms on measured groups along certain random walks. More precisely, let $(\Omega,\cB,\bP)$ be a probability measure space, and $(\omega)_n$ a sequence of $\mu$-distributed independent
$G$-valued measurable maps on $\Omega$. We then define for all $n \geq 1$ a measurable map $z_n$ by $z_n := \omega_0 \cdots \omega_{n-1}$ and refer
to the sequence $(z_n)$ as a \emph{random walk on $G$ with $\mu$-distributed increments}.  We will then be concerned with the asymptotic distributional and almost sure behavior of the sequence $(\phi(z_n))$ for a $\mu$-integrable quasimorphism $\phi$ on $G$.

In order to state our main result, we introduce the following terminology and notations: We denote 
by $\mu^{*n}$ the $n$'th convolution of $\mu$, and by $G_\mu$ the closed semigroup generated by the
support of $\mu$. We say that $\mu$ is \emph{symmetric} if $\iota_*\mu = \mu$, where $\iota$ is the involution of $G$ given by $\iota(g) = g^{-1}$. Moreover, if $G$ is locally compact then $\mu$ is called \emph{spread out} if some convolution power $\mu^{*n}$ is non-singular with respect to the Haar measure on $G$.

Given a $\mu$-integrable quasimorphism $\phi$ we define its \emph{$\mu$-distortion} $\ell_\mu(\phi)$ by the formula 
\[
\ell_\mu(\phi) = \lim_n \frac{1}{n} \int_G \phi \, d\mu^{*n}.
\]
We say that $\phi$ is \emph{$\mu$-tame} if there is a constant $C$ such that 
\[
| \phi(g) - n \ell_\mu(\phi) | \leq C,
\]
for $\mu^{*n}$-almost every $g$ and all $n$. In particular, if $\mu$ is symmetric, then $\ell_\mu(\phi) = 0$  so that in this case $\phi$ is $\mu$-tame if
and only if it is $\mu^{*n}$-essentially bounded on $G_\mu$ for all $n$.

\subsection{Statement of the main results}
Let $(G,\mu)$ be a measured group, and suppose that $(z_n)$ is a random walk on $G$ with $\mu$-distributed increments. A $\mu$-measurable 
quasimorphism $\phi$ on $G$ \emph{satisfies the central limit theorem} with respect to $\mu$ if there exists a non-negative constant $\sigma$, such that the sequence
\[
\Big( \, \frac{\phi(z_n) - n \ell_\mu(\phi)}{\sqrt{n}} \, \Big)
\]
converges in the sense of distributions to a centered Gaussian probability distribution with variance $\sigma^2$. We say that the central limit theorem is 
\emph{non-degenerate} if $\sigma > 0$. Moreover, if the limit superior
\[
\varlimsup_{n \to \infty} \frac{\phi(z_n) - n \ell_\mu(\phi)}{\sqrt{n \log \log{n}}} = \sigma > 0,
\]
holds almost everywhere, we say that $\phi$ \emph{satisfies the law of the iterated logarithm} with respect to $\mu$. We can now state the first version of our main theorem:

\begin{theorem} \label{mainZFC}
Let $G$ be a locally compact second countable topological group, $\mu$ a spread-out  probability measure on $G$ and $\phi$ a Borel measurable quasimorphism 
on $G$, which is square-integrable with respect to $\mu$. Then $\phi$ satisfies the central limit theorem with respect to $\mu$. The central limit theorem is non-degenerate if and only if $\phi$ is not $\mu$-tame. In this case, $\phi$ also satisfies the law of the iterated logarithm with 
respect to $\mu$.
\end{theorem}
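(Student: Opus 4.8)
The plan is to reduce everything to a martingale limit theorem. Write $P$ for the Markov operator $P\psi(g)=\int_G\psi(gh)\,d\mu(h)$ and $\cF_n=\sigma(\omega_0,\dots,\omega_{n-1})$, so that $\bE[\psi(z_{n+1})\mid\cF_n]=(P\psi)(z_n)$. The first step is to replace $\phi$ by a quasi-biharmonic quasimorphism $\psi$ at bounded distance, as furnished by the reduction result quoted in the abstract. Since $\sup_G|\phi-\psi|<\infty$, the sequences $\phi(z_n)$ and $\psi(z_n)$ differ by a bounded amount; hence $\ell_\mu(\phi)=\ell_\mu(\psi)$, the two quasimorphisms are simultaneously $\mu$-tame, and each of the three assertions (central limit theorem, non-degeneracy, law of the iterated logarithm) holds for $\phi$ if and only if it holds for $\psi$. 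It therefore suffices to treat $\psi$.

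The second step is the martingale decomposition of $S_n:=\psi(z_n)-n\ell_\mu(\psi)$. Set $h:=(P-I)\psi$; expanding with the cocycle identity gives $h(g)=\int_G\psi\,d\mu+\int_G\partial\psi(g,s)\,d\mu(s)$, so $h$ is bounded with oscillation at most $D(\psi)$. The increments $D_k:=\psi(z_k)-(P\psi)(z_{k-1})$ satisfy $\bE[D_k\mid\cF_{k-1}]=0$, so $M_n:=\sum_{k=1}^nD_k$ is a martingale and $\psi(z_n)=M_n+\sum_{k=0}^{n-1}h(z_k)$. Here (quasi-)biharmonicity is decisive: it makes $h$ harmonic up to a bounded coboundary, and the deterministic law of large numbers $\psi(z_n)/n\to\ell_\mu(\psi)$ (valid for any integrable quasimorphism by Kingman's theorem applied to $\pm\psi$) pins the harmonic part of $h$ to the constant $\ell_\mu(\psi)$. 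Consequently $\sum_{k=0}^{n-1}\big(h(z_k)-\ell_\mu(\psi)\big)$ is a martingale plus a uniformly bounded remainder, and absorbing it into $M_n$ yields $S_n=M_n+O(1)$ with $M_n$ a square-integrable martingale.

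The third step applies Billingsley's martingale central limit theorem to $M_n$. Writing $D_k=\big(\psi(\omega_{k-1})-\int_G\psi\,d\mu\big)+\big(\partial\psi(z_{k-1},\omega_{k-1})-\int_G\partial\psi(z_{k-1},s)\,d\mu(s)\big)$ displays each difference as the sum of an $L^2(\mu)$ term depending only on the increment $\omega_{k-1}$—square-integrable precisely because $\phi$, hence $\psi$, is square-integrable—and a term bounded by $2D(\psi)$. To obtain the stationary ergodic martingale-difference sequence required by Billingsley, I would model the walk on the Poisson boundary $(B,\nu)$ of $(G,\mu)$, available because $G$ is locally compact second countable and $\mu$ is spread out: using $z_{k-1}\nu\to\delta_\xi$, the defect cocycle $\partial\psi(z_{k-1},\omega_{k-1})$ converges in $L^2$ to a stationary boundary cocycle, the non-stationary discrepancy being Cesàro-negligible. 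This identifies a finite variance $\sigma^2=\lim_n\tfrac1n\bE[S_n^2]$ and gives $S_n/\sqrt n\to N(0,\sigma^2)$ in distribution. I expect this step—upgrading the merely adapted sequence $(D_k)$ to a stationary ergodic one and computing $\sigma^2$ through the boundary cocycle—to be the principal technical obstacle.

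Finally, the non-degeneracy dichotomy and the law of the iterated logarithm. If $\psi$ is $\mu$-tame then $S_n$ is uniformly bounded, so $M_n$ is a bounded martingale whose stationary increments must vanish, whence $\sigma=0$. Conversely $\sigma=0$ forces the limiting stationary martingale differences, and hence $M_n$ and $S_n$, to be bounded, which is exactly $\mu$-tameness; thus the central limit theorem is non-degenerate if and only if $\phi$ is not $\mu$-tame. In that case I would invoke Stout's law of the iterated logarithm for stationary ergodic martingales, applied to $M_n$ and combined with $S_n=M_n+O(1)$, to deduce the stated almost sure identity for $\varlimsup_n\big(\phi(z_n)-n\ell_\mu(\phi)\big)/\sqrt{n\log\log n}$.
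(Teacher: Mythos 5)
Your overall architecture---replace $\phi$ by a quasi-biharmonic representative $\tphi$ at bounded distance (the paper's Proposition \ref{ZFC-alternative}), approximate $\phi(z_n)-n\ell_\mu(\phi)$ by a square-integrable martingale up to a uniformly bounded error, and conclude via Billingsley and Stout---is exactly the paper's. The genuine gap is in your third step, the one you yourself flag as the principal obstacle. Billingsley's theorem \cite{Bil61} and Stout's theorem \cite{Sto70} are statements about \emph{stationary ergodic} square-integrable martingale-difference sequences, and your increments $D_k$ are adapted but not stationary: the defect term $\partial\psi(z_{k-1},\omega_{k-1})-\int_G\partial\psi(z_{k-1},s)\,d\mu(s)$ depends on the entire position $z_{k-1}$, not just on the increment. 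Your proposed remedy---use $z_{k-1}\nu\to\delta_\xi$ on the Poisson boundary and declare the non-stationary discrepancy ``Ces\`aro-negligible''---is not carried out and does not obviously work: convergence of the measures $z_{k-1}\nu$ controls harmonic functions of the position, not the joint distribution of $(z_{k-1},\omega_{k-1})$, and in any case a perturbation that is merely $o(n)$ in Ces\`aro mean can still swamp the $\sqrt{n}$ normalization of a central limit theorem. As written, neither the existence of $\sigma^2=\lim_n\frac1n\bE[S_n^2]$ nor the applicability of the two quoted limit theorems is established.

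The telling symptom is that your argument never uses quasi-\emph{left}-harmonicity of $\tphi$; that is precisely the missing ingredient. In Proposition \ref{cocycle martingaleZFC} the paper extends the increments to a two-sided stationary sequence $(\omega_n)_{n\in\bZ}$ with shift $\tau$, sets $q_k=\omega_{-k+1}\cdots\omega_{-1}$, and observes that for each fixed $g$ the bounded function $h\mapsto\tphi(hg)-\tphi(h)$ is left-harmonic, so that $k\mapsto\tphi(q_kz_n)-\tphi(q_k)$ is a bounded martingale in $k$. Bounded martingale convergence then produces $\eta_n=\lim_k\bigl(\tphi(q_kz_n)-\tphi(q_k)\bigr)$, which is an additive cocycle over $\tau$, i.e.\ $\eta_{n+m}=\eta_m+\eta_n\circ\tau^m$; hence its increments $\eta_1\circ\tau^n$ are stationary and ergodic, while quasi-\emph{right}-harmonicity gives $\bE[\eta_{n+1}-\eta_n\mid\cB^n_{-\infty}]=\ell_\mu(\phi)$, making $\rho_n=\eta_n-n\ell_\mu(\phi)$ a centered stationary cocycle martingale within $3D(\phi)$ of $\phi(z_n)-n\ell_\mu(\phi)$. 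In other words, the ``conditioning on a boundary point'' you gesture at is implemented as martingale convergence over the \emph{past} of the walk, and it needs both halves of biharmonicity. Once this stationarization is in place, your remaining steps (the reduction in Step 1, the bounded-error martingale decomposition, and the tameness dichotomy, which becomes the statement that $\sigma=\|\rho_1\|_2$ vanishes iff $\rho_1=0$ a.e.\ iff $\phi$ is $\mu$-tame) go through as in the paper.
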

We remark that the assumptions on $\mu$ and $\phi$ are very weak. For example, if $\mu$ has exponential moments, then every measurable quasimorphism will be $\mu$-square-integrable so that the theorem applies.\\

It is possible to extend Theorem \ref{mainZFC} even further to arbitrary measured groups if one is willing to make additional assumptions concerning the model of set theory that one uses. For example, assuming the continuum hypothesis (CH), we can prove:
\begin{theorem}[CH]\footnote{For the convenience of the reader not interested in this generalization we have marked all results in this paper which depend on the continuum hypothesis by (CH).}\label{main}
Let $(G,\mu)$ be a measured group and suppose that $\phi$ is a Borel measurable quasimorphism 
on $G$, which is square-integrable with respect to $\mu$. If $\phi$ is not $\mu$-tame, then 
it satisfies the non-degenerate central limit theorem and the law of the iterated logarithm with 
respect to the measure $\mu$.
\end{theorem}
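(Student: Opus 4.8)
The plan is to combine the structural reduction to biharmonic quasimorphisms with the classical martingale limit theorems, the only genuinely new ingredient compared with Theorem \ref{mainZFC} being that the biharmonic reduction must now be carried out for an \emph{arbitrary} measured group. First I would invoke the structural theorem in its general form: every square-integrable Borel measurable quasimorphism $\phi$ is at bounded distance from a quasimorphism $\psi$ whose Laplacian $\Delta\psi := P\psi - \psi$ is constant, equal to $\ell_\mu(\phi)$, where $P$ is the Markov operator $Pf(g) = \int_G f(gh)\,d\mu(h)$; such a $\psi$ is biharmonic. Since $\phi - \psi$ is bounded we have $\ell_\mu(\psi) = \ell_\mu(\phi) =: \ell$, the quasimorphism $\phi$ is $\mu$-tame if and only if $\psi$ is, and the sequences $(\phi(z_n) - n\ell)/\sqrt n$ and $(\psi(z_n) - n\ell)/\sqrt n$, together with their $\sqrt{n\log\log n}$-analogues, have identical asymptotics; hence it suffices to prove the theorem for $\psi$. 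Because $G$ now carries no Haar measure, the corrector $u = \phi - \psi$, which formally solves the Poisson equation $(I-P)u = \ell - \Delta\phi$, cannot be produced by spectral or spread-out arguments; instead one constructs $u$ as a universally measurable bounded function by applying a medial limit to suitable averages of the iterates $P^k(\ell - \Delta\phi)$. The existence of medial limits is precisely where the continuum hypothesis enters, and establishing this general biharmonic reduction is the first main obstacle.

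Granting this, I would set up the Dynkin martingale. Normalizing $\psi(e) = 0$, biharmonicity gives $\bE[\psi(z_{n+1}) \mid \cF_n] = P\psi(z_n) = \psi(z_n) + \ell$, so that
\[
M_n := \psi(z_n) - n\ell
\]
is an honest martingale for the filtration $\cF_n = \sigma(\omega_0,\dots,\omega_{n-1})$. Its increments
\[
D_n = \psi(z_{n+1}) - \psi(z_n) - \ell = \psi(\omega_n) + \partial\psi(z_n,\omega_n) - \ell
\]
are square-integrable, since $\psi$ is square-integrable and $\partial\psi$ is bounded by $D(\psi)$. Thus the behavior of $\phi$ along the walk is reduced, up to a bounded term, to a limit theorem for the $L^2$-martingale $(M_n)$.

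The final step is to verify the hypotheses of the martingale central limit theorem of Billingsley and of the law of the iterated logarithm of Stout. For this I would realize $(D_n)$ as a stationary ergodic sequence of martingale differences: the increment $D_n$ depends on the past only through $z_n$, and its conditional second moment is $\bE[D_n^2 \mid \cF_n] = V(z_n)$ with $V(g) = \int_G (\psi(h) + \partial\psi(g,h) - \ell)^2\,d\mu(h)$ a bounded function. Passing to the trajectory space with its shift (equivalently, to the Poisson boundary with its stationary measure) exhibits $(D_n)$ as a stationary ergodic sequence and yields the almost sure convergence $\frac1n\sum_{k<n} V(z_k) \to \sigma^2$ of the normalized conditional variances, where $\sigma^2 = \bE[D_0^2]$. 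Billingsley's theorem then gives $M_n/\sqrt n \to N(0,\sigma^2)$ in distribution, and Stout's theorem gives the law of the iterated logarithm with the same $\sigma$, both transferring to $\phi$ by the bounded comparison of the first step. Finally, $\sigma^2$ vanishes exactly when $D_n = 0$ almost surely for all $n$, i.e. when $\psi(z_n) = n\ell$ almost surely, which is precisely the assertion that $\psi$, and hence $\phi$, is $\mu$-tame; so the hypothesis that $\phi$ is not $\mu$-tame forces $\sigma > 0$ and the central limit theorem is non-degenerate. I expect the main difficulty, beyond the CH-dependent biharmonic reduction, to be this passage to a genuinely stationary ergodic model for the increments, since the walk $(z_n)$ is transient and $(D_n)$ is not stationary in the naive sense.
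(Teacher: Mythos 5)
Your overall architecture matches the paper's: under (CH) you reduce to a quasi-biharmonic representative via medial limits (this is exactly Proposition \ref{quasi-biharmonic}, built on Mokobodzki's Proposition \ref{mokobodski}), and you then feed a martingale into Billingsley's CLT and Stout's LIL. The reduction step is fine. The gap is in the step you yourself flag as the main difficulty: producing a \emph{stationary ergodic} martingale difference sequence. Your forward martingale $M_n = \psi(z_n) - n\ell$ with increments $D_n = \psi(\omega_n) + \partial\psi(z_n,\omega_n) - \ell$ uses only the right-harmonicity of $\psi$; these increments genuinely depend on $z_n$, are not stationary, and ``passing to the trajectory space with its shift (equivalently, to the Poisson boundary)'' does not by itself make them stationary --- no change of model alters the joint distribution of $(D_n)$. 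Likewise, the conditional variance convergence $\frac1n\sum_{k<n}V(z_k)\to\sigma^2$ that you would need to invoke a non-stationary martingale CLT is asserted without proof, and for a transient walk there is no Birkhoff-type theorem supplying it.

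The paper closes this gap by exploiting the \emph{left}-harmonicity of $\tphi$, which your argument never uses and which is the entire reason one insists on a bi-harmonic rather than merely right-harmonic representative. One extends to a bi-infinite i.i.d.\ sequence $(\omega_n)_{n\in\bZ}$ with shift $\tau$, sets $q_k=\omega_{-k+1}\cdots\omega_{-1}$, and observes that for fixed $g$ the function $h\mapsto\tphi(hg)-\tphi(h)$ is bounded and left-harmonic, so $k\mapsto\tphi(q_k z_n)-\tphi(q_k)$ is a bounded martingale; its almost sure limit $\eta_n$ satisfies $|\eta_n-\tphi(z_n)|\le D(\tphi)$ and, crucially, the cocycle identity $\eta_{n+m}=\eta_m+\eta_n\circ\tau^m$, so the increments $\tau^n\eta_1$ form a genuinely stationary ergodic sequence which is still a martingale difference sequence for the filtration $(\cB^n_{-\infty})$ by right-harmonicity. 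It is this corrected sequence, not $(D_n)$, to which Billingsley's and Stout's theorems are applied. Without this device (or an equivalent one) your proof does not go through; with it, your remaining steps (bounded comparison, identification of degeneracy with $\mu$-tameness) agree with the paper's.
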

In fact, it would suffice to assume Martin's axiom in descriptive set theory, which is weaker then (CH). However, we do not know how to prove Theorem \ref{main} within ZFC.

\subsection{General strategy}
Let us call two quasimorphisms $\phi$ and $\tphi$ \emph{equivalent} if their difference is uniformly bounded. The results we are interested in are invariant under replacing a quasimorphism by an equivalent one; in particular, $\ell_\mu(\phi)$ does not change upon changing $\phi$ to an equivalent representative. Thus for the proofs of Theorem \ref{mainZFC} and Theorem \ref{main} it is convenient to work with a representative of the equivalence class of $\phi$, which is adapted to the measure $\mu$. The following definition makes this idea precise:
\begin{definition}
Let $(G,\mu)$ be a measured group. Let $(G,\mu)$ be a measured group. A real-valued function $\tphi$ on $G$ is \emph{quasi-right-harmonic with
respect to $\mu$} if it is Borel measurable and integrable with respect to the measure $\mu$ and there is a constant $\ell$ such that
\[
\int_G \tphi(gh) \, d\mu(h) = \tphi(g) + \ell,
\]
for all $g$. If $\ell = 0$, we say that $\tphi$ is \emph{right-harmonic}. There is also an obvious 'left'-notion of quasi-harmonicity which we
will adapt, and we say that $\tphi$ is \emph{quasi-biharmonic with respect to $\mu$} if it is both quasi-left- and quasi-right-harmonic with 
respect to $\mu$. 
\end{definition}
Under the assumptions of Theorem \ref{mainZFC} (or Theorem \ref{main} and (CH)) we are able to establish the existence of a $\mu$-quasi-biinvariant quasimorphism $\tphi$ equivalent to $\phi$. In the situation of Theorem \ref{mainZFC} it is even unique up to a constant, and assuming (CH) it can be chosen universally measurable.\\

From the existence of $\tphi$ one deduces immediately that for a random walk $(z_n)$ on $G$ with $\mu$-distributed increments the sequence $(\phi(z_n) - n \ell_\mu(\phi))$ can be uniformly approximated by a cocycle martingale. Our square-integrality assumption on $\phi$ implies that this cocycle martingale can be chosen square-integrable as well. Then our results follow from the fundamental works of Billingsley 
and Stout on square-integrable cocycle martingales. 

\subsection{Integral Representations} Assume now that the measured group $(G, \mu)$ in question is countable (and the topology on $G$ is the discrete one). Then, by the results established in the proof of the two main theorems, every quasimorphism $\phi$ on $G$ is at bounded distance to a unique $\mu$-quasi-biharmonic quasimorphism $\tphi$ subject to the normalization $\tphi(e) = 0$, and this can be shown without invoking (CH). If we assume that $\phi$ is cohomologically non-trivial, i.e. not at bounded distance from a homomorphisms, then we can establish an integral representation theorem for $\tphi$:
\begin{theorem}\label{IntRepIntro} Let $(G, \mu)$ be a countable measured group, $\phi$ a cohomologically non-trivial quasimorphism on $G$ and $\tphi$ as above. Then there exists a non-atomic $G$-quasi-invariant Lebesgue space $(X, \nu)$ and an additive cocycle $\alpha: G \times X \to \R$ such that
\[\tphi(g) = \int_{X} \alpha(g,\xi) \, d\nu(\xi) \quad(g \in G).\]
\end{theorem}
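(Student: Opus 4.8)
The plan is to realise the representation on the Poisson boundary of $(G,\mu)$, reading off the cocycle from the martingale boundary values of the right translates of $\tphi$. Let $(B,\nu)$ be the Poisson boundary of the random walk, with $\nu$ the harmonic (that is, $\mu$-stationary) measure. Since $G$ is countable, $(B,\nu)$ is a standard Lebesgue space on which $G$ acts measurably with $g\nu \ll \nu$ for every $g$, so it is $G$-quasi-invariant; this will be the space $(X,\nu)$. For each $g \in G$ I introduce the function
\[
F_g(x) := \tphi(gx) - \tphi(x) \qquad (x \in G).
\]
Because $\tphi$ is a quasimorphism we have $F_g(x) = \tphi(g) + \partial\tphi(g,x)$, so $F_g$ is bounded with $\|F_g\|_\infty \le |\tphi(g)| + D(\tphi)$. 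Because $\tphi$ is quasi-right-harmonic with constant $\ell = \ell_\mu(\phi)$, this constant cancels in the difference and $F_g$ is genuinely $\mu$-harmonic in $x$: integrating, $\int_G F_g(xk)\,d\mu(k) = (\tphi(gx)+\ell) - (\tphi(x)+\ell) = F_g(x)$. Thus each $F_g$ is a bounded $\mu$-harmonic function.

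By bounded martingale convergence each $F_g$ admits boundary values $\alpha(g,\cdot) := \widehat{F_g} \in L^\infty(B,\nu)$, determined by $\widehat{F_g}(\xi) = \lim_n F_g(z_n)$ along $\nu$-almost every trajectory $z_n \to \xi$, and obeying the Poisson formula $F_g(x) = \int_B \widehat{F_g}\, d(x\nu)$. Evaluating at $x = e$ and using $\tphi(e)=0$ gives the desired identity
\[
\tphi(g) = F_g(e) = \int_B \alpha(g,\xi)\, d\nu(\xi).
\]
To see that $\alpha$ is an additive cocycle, I use the functional relation $F_{g_1 g_2}(x) = F_{g_1}(g_2 x) + F_{g_2}(x)$. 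The middle term is the left $g_2$-translate of the harmonic function $F_{g_1}$; the walk started at $g_2$ converges to $g_2\xi$, so its boundary value is $\xi \mapsto \widehat{F_{g_1}}(g_2 \xi)$. Passing to boundary values therefore yields
\[
\alpha(g_1 g_2, \xi) = \alpha(g_1, g_2\xi) + \alpha(g_2, \xi)
\]
for $\nu$-almost every $\xi$. As $G$ is countable these relations hold off a single $\nu$-null set simultaneously for all pairs, and a routine cocycle-reduction modifies $\alpha$ on a null set to an everywhere-defined measurable additive cocycle $\alpha \colon G \times B \to \R$.

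The remaining and, I expect, decisive point is to guarantee that $\nu$ is non-atomic, and this is exactly where cohomological non-triviality enters. I would argue its contrapositive: if $\nu$ has an atom then $\phi$ is at bounded distance from a homomorphism. The Poisson boundary is $\mu$-ergodic, and the splitting of $\nu$ into its atomic and continuous parts is $G$-equivariant, exhibiting $\nu$ as a convex combination of two $\mu$-stationary measures; ergodicity forces one summand to vanish, so $\nu$ is either non-atomic or purely atomic. In the purely atomic case, stationarity shows that the finitely many atoms of maximal mass form a $G_\mu$-invariant set $A$, and ergodicity forces $\nu(A) = 1$; hence $\nu$ is the uniform measure on the finite $G$-set $A$, and in particular $G$-invariant. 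But if $\nu$ is $G$-invariant then integrating the cocycle identity gives
\[
\tphi(g_1 g_2) = \int_B \alpha(g_1, g_2\xi)\, d\nu(\xi) + \int_B \alpha(g_2,\xi)\, d\nu(\xi) = \tphi(g_1) + \tphi(g_2),
\]
so $\tphi$ is a genuine homomorphism and $\phi$ is cohomologically trivial, contrary to hypothesis. Therefore $\nu$ is non-atomic, $(B,\nu)$ is the required non-atomic $G$-quasi-invariant Lebesgue space, and the representation above completes the proof.
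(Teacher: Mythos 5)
Your proposal is correct and follows the same overall route as the paper: the space is the Poisson boundary $(\partial_\mu G,\nu)$, the cocycle is obtained as the boundary value of the bounded $\mu$-harmonic functions $h\mapsto\tphi(gh)-\tphi(h)$ (harmonicity coming from quasi-right-harmonicity of $\tphi$, with the constant $\ell_\mu(\phi)$ cancelling in the difference), the representation follows by evaluating at $h=e$, and the cocycle identity follows from $\tphi(ghk)-\tphi(k)=(\tphi(ghk)-\tphi(hk))+(\tphi(hk)-\tphi(k))$ together with countability of $G$ to clean up null sets. The two genuine differences are minor but worth noting. First, for the cocycle identity you pass to boundary values via martingale convergence along trajectories, whereas the paper compares the two sides as bounded harmonic functions of $k$ and invokes uniqueness of the Poisson representation; these are equivalent, and the paper's version avoids having to justify that the boundary value of $x\mapsto F(g_2x)$ is $\xi\mapsto\widehat F(g_2\xi)$. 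Second, and more substantively, for non-atomicity the paper simply observes that cohomological non-triviality forces $G$ to be non-amenable and cites Furstenberg for the fact that the Poisson boundary of a non-amenable group is non-atomic, while you give a self-contained argument: an atomic stationary ergodic measure is uniform on a finite invariant orbit, hence $G$-invariant, and integrating the cocycle identity against an invariant measure would make $\tphi$ a homomorphism. Your argument is more elementary and ties non-atomicity directly to the hypothesis on $\phi$; just note that the invariance of the set of maximal atoms under all of $G$ (not merely the support of $\mu$) uses the paper's standing assumption that $\operatorname{supp}\mu$ generates $G$ as a semigroup.
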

A more precise version of this theorem will be proved in Proposition \ref{IntRepMain} below.

\subsection{Connection to existing work}
The ideas developed in the present paper allow for a short proof of an (unpublished) conjecture of Calegari on central limit theorems 
for quasimorphisms along random walks on countable groups. Partial results in this direction, complemented with a wide range of 
applications, can be found in a recent preprint \cite{CaMa10} by Calegari and Maher, which is heavily inspired by an earlier paper 
\cite{CaFu10} by Calegari and Fujiwara. We also mention the works of Horsham \cite{Hor08} and Horsham-Sharp \cite{HoSh09} 
where more precise limit theorems are established for quasimorphisms along (not necessarily independent) random walks on free groups
and certain surface groups.

The results on integral representations of quasimorphisms are inspired from the work of Picaud in 
\cite{Pic97}. In trying to extend his results to more general groups, we came up with the idea to study quasimorphisms along random trajectories. These attempts ultimately triggered most of the results in this paper.

\subsection{Outline of the paper}
The paper is organized as follows. We start in Section \ref{Quasiharmonic} by providing the existence of a suitable quasi-biharmonic quasimorphisms equivalent to a given one under the assumptions of either Theorem \ref{mainZFC} of Theorem \ref{main}. In the former case, the existence is established using a standard Banach limit argument going back to Burger and Monod \cite{BuMo}. In the latter case we make use of measure-linear means. A beautiful construction of such means assuming (CH) was provided by Mokobodzki; since there are few accessible accounts of Mokobodzki's 
construction, we explain the necessary details in an appendix, expanding the classical presentation in \cite{Mey73}. In Section \ref{cocycle martingales} we provide the proofs of Theorem \ref{mainZFC} and Theorem \ref{main} by reduction to the case of cocycle 
martingales, for which a quite satisfactory central limit theory is known. Finally, Section \ref{cocycles} is devoted to establish the integral representation result stated in Theorem \ref{IntRepIntro} and some variants thereof.

\subsection*{Acknowledgements}
We thank ETH Z\"urich, Hebrew University and Technion for hospitality  during the preparations of this article, and 
Uri Bader, Marc Burger, Danny Calegari, Albert Fisher, Matt Foreman, Nicolas Monod, Amos Nevo, Jean-Claude Picaud, Jean-Paul Thouvenot and 
Benji Weiss for many stimulating discussions on the subject of this article. T. H. was partially supported by SNF
grant PP002-102765.

\section{Quasi-biharmonic representations of quasimorphisms} \label{Quasiharmonic}

We now aim to construct quasi-biharmonic quasimorphisms equivalent to a given quasimorphism. Our first observation concerns the possible values of the additive constant implicit in the term quasi-biharmonic:
\begin{lemma} \label{quasi-harmonic}
Let $(G,\mu)$ be a measured group, and suppose that $\phi$ is quasi-right-harmonic quasimorphism with 
respect to $\mu$. Then, 
\[
\int_G \phi(gh) \, d\mu(h) = \phi(g) + \ell_\mu(\phi),
\]
for all $g \in G$, where $\ell_\mu$ is the $\mu$-distortion of $\phi$.
\end{lemma}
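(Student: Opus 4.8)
The plan is to iterate the quasi-right-harmonic relation once per convolution power and read off the distortion from the resulting recursion. Write $a_n := \int_G \phi \, d\mu^{*n}$, so that by definition $\ell_\mu(\phi) = \lim_n \frac{1}{n} a_n$. Before iterating, I would first verify that each $a_n$ is well-defined, i.e. that $\phi$ is $\mu^{*n}$-integrable for every $n$. This is exactly where the quasimorphism hypothesis (as opposed to mere harmonicity) enters: from $| \phi(gh) | \leq | \phi(g) | + | \phi(h) | + D(\phi)$ together with the $\mu$-integrability of $\phi$, an easy induction using $\mu^{*(n+1)} = \mu^{*n} * \mu$ gives
\[
\int_G | \phi | \, d\mu^{*n} \leq n \int_G | \phi | \, d\mu + (n-1) D(\phi) < \infty.
\]
Thus all the $a_n$ are finite, and these bounds are uniform enough to justify the applications of Fubini's theorem below.

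With integrability in hand, I would compute $a_{n+1}$ by splitting off the last increment. Since $\mu^{*(n+1)} = \mu^{*n} * \mu$, we have
\[
a_{n+1} = \int_G \int_G \phi(gh) \, d\mu(h) \, d\mu^{*n}(g).
\]
The inner integral is precisely the left-hand side of the quasi-right-harmonic relation, which by hypothesis equals $\phi(g) + \ell$. Substituting this and using that $\mu^{*n}$ is a probability measure yields the recursion
\[
a_{n+1} = \int_G \big( \phi(g) + \ell \big) \, d\mu^{*n}(g) = a_n + \ell.
\]

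From $a_{n+1} = a_n + \ell$ it follows that $a_n = a_1 + (n-1)\ell$, and hence $\frac{1}{n} a_n \to \ell$ as $n \to \infty$. By the very definition of the $\mu$-distortion this gives $\ell_\mu(\phi) = \lim_n \frac{1}{n} a_n = \ell$. Plugging $\ell = \ell_\mu(\phi)$ back into the defining relation then produces the claimed identity $\int_G \phi(gh) \, d\mu(h) = \phi(g) + \ell_\mu(\phi)$ for all $g \in G$.

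The computation itself is short; the only genuine point requiring care is the preliminary step, namely ensuring that $\phi$ is integrable against every convolution power — so that the sequence $(a_n)$, and therefore $\ell_\mu(\phi)$, is actually defined — and that Fubini's theorem legitimately applies to the double integral. Both are handled cleanly by the boundedness of $\partial\phi$, which is why the lemma requires $\phi$ to be a quasimorphism and not merely a quasi-right-harmonic function.
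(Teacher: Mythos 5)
Your proof is correct and follows essentially the same route as the paper: iterate the defining relation $\int_G \phi(gh)\,d\mu(h) = \phi(g)+\ell$ through the convolution powers to get $\int_G \phi\,d\mu^{*n} = \ell n + O(1)$, divide by $n$, and identify $\ell$ with $\ell_\mu(\phi)$ (the paper merely normalizes $\phi(e)=0$ so that the $O(1)$ term vanishes). Your preliminary verification that $\phi$ is $\mu^{*n}$-integrable, via the bound $|\phi(gh)|\leq|\phi(g)|+|\phi(h)|+D(\phi)$, is a detail the paper leaves implicit and is a worthwhile addition.
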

\begin{proof}
We can without loss of generality assume that $\phi(e) = 0$. Hence, for all $n$, we have
\[
\int_G \phi(h) \, d\mu^{*n}(h) = n \ell,
\]
which immediately gives the result. 
\end{proof}
We can now prove the desired existence result in the situation of Theorem \ref{main}. For this we use the existence of measure-linear means (due to Mokobodzki), as explained in detail in the appendix of this paper. As far as the statement of the present proposition is concerned we recall that given a measurable space $(Y,\cB)$ a subset $A \subseteq Y$ is called \emph{universally measurable} if it is measurable
with respect to every complete finite measure on the $\cB$. If $(Z,\cC)$ is another 
measurable space, we say that a map $f : Y \to Z$ is universally measurable if the pre-image of every 
universally measurable set in $Z$ is universally measurable in $Y$. For a more detailed discussion of these terms we refer the reader to the appendix.
\begin{proposition}[CH]\label{quasi-biharmonic}
Let $(G,\mu)$ be a measured group, and $\phi$ a Borel measurable quasimorphism, integrable with respect 
to $\mu$. Then there is a universally measurable quasimorphism $\tphi$, which is equivalent 
to $\phi$, and quasi-biharmonic with respect to $\mu$, i.e.
\[
\int_G \tphi(gk) \, d\mu(k) = \int_G \tphi(kg) \, d\mu(k) = \tphi(g) + \ell_\mu(\phi),
\]
where $\ell_\mu(\phi)$ is the $\mu$-invariant of $\phi$. 
\end{proposition}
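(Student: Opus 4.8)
The plan is to realise $\tphi$ as an explicit measure-linear average of the iterated convolution-translates of $\phi$, using a mean $m$ on $\ell^\infty(\bN)$ of Mokobodzki type (constructed in the appendix under (CH)). Write $Pf(g) = \int_G f(gk)\,d\mu(k)$ and $Qf(g) = \int_G f(kg)\,d\mu(k)$ for the right and left Markov operators attached to $\mu$; these are linear, fix the constants, contract the supremum norm, and---crucially---commute with one another, since $\int_G\int_G f(k'gk)\,d\mu(k)\,d\mu(k')$ is symmetric in the two integrations. Normalising $\phi(e)=0$ and setting $\ell = \ell_\mu(\phi)$, the quasimorphism estimate gives $P^j\phi = \phi + a_j + \epsilon_j$ with $a_j = \int_G\phi\,d\mu^{*j}$ and $|\epsilon_j|\le D(\phi)$; since $(a_j)$ is quasi-additive ($|a_{p+q}-a_p-a_q|\le D(\phi)$) one also has $|a_j - j\ell|\le D(\phi)$. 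Hence the functions $\theta_j := P^j\phi - \phi - j\ell$ are measurable and uniformly bounded by $2D(\phi)$, and symmetrically for the left iterates $Q^j$.

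First I would produce a right-quasi-harmonic representative. The properties of $m$ I rely on are that it is linear with $\liminf \le m \le \limsup$, and that it is \emph{measure-linear}: for any uniformly bounded family $(f_j)$ of measurable functions, $g\mapsto m\big((f_j(g))_j\big)$ is universally measurable and commutes with integration against $\mu$ and its convolution powers, hence with $P$ and $Q$. Define
\[
\psi \,:=\, \phi + m\Big(\big(\, \tfrac{1}{n}\textstyle\sum_{j=1}^{n}\theta_j \,\big)_{n}\Big).
\]
As the Cesàro averages are bounded by $2D(\phi)$, $\psi$ is a universally measurable quasimorphism with $\|\psi-\phi\|_\infty \le 2D(\phi)$. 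Right-harmonicity then falls out of the telescoping identity $P\theta_j = \theta_{j+1} + (\ell - a_1 - \rho)$, where $\rho := P\phi - \phi - a_1$: applying $P$ under the mean (measure-linearity) and using that $\tfrac1n(\theta_{n+1}-\theta_1)\to 0$ together with $\liminf\le m\le\limsup$, the Cesàro sums absorb the shift and one finds $P\psi - \psi = (a_1+\rho) + (\ell - a_1 - \rho) = \ell$, in accordance with Lemma~\ref{quasi-harmonic}.

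Next I would left-harmonise $\psi$ by the symmetric construction, setting $\tphi := \psi + m\big(\big(\tfrac1n\sum_{j=1}^n(Q^j\psi - \psi - j\ell)\big)_n\big)$; the same argument with $Q$ in place of $P$ gives $Q\tphi - \tphi = \ell$, and $\tphi$ remains universally measurable, equivalent to $\phi$, and a quasimorphism. The point is that this second averaging does not destroy right-harmonicity: writing $S'_n = \tfrac1n\sum_{j=1}^n(Q^j\psi-\psi-j\ell)$ and using $PQ=QP$, $P\mathbf 1 = \mathbf 1$ and $P\psi = \psi+\ell$, a direct computation collapses $PS'_n$ to $S'_n$, so that $P\tphi = P\psi + m((PS'_n)_n) = (\psi+\ell) + m((S'_n)_n) = \tphi + \ell$. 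Thus $\tphi$ is quasi-biharmonic with constant $\ell_\mu(\phi)$, as required.

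The main obstacle is precisely the interchange of the mean with integration against $\mu$ that is invoked at every harmonicity step: for an ordinary Banach limit this is false, such a limit being only finitely additive, and it is exactly Mokobodzki's measure-linearity (whose construction consumes (CH)) that legitimises pulling $P$ and $Q$ through $m$ and, simultaneously, guarantees that the pointwise-defined $\tphi$ is universally measurable. Everything else---the uniform bounds, the telescoping identity, and the commutativity trick that yields biharmonicity from two one-sided harmonisations---is elementary once this interchange is available.
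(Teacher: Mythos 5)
Your argument is correct, but it reaches biharmonicity by a genuinely different mechanism than the paper. The paper first passes to the homogeneous representative $\hphi(g)=\lim_n\phi(g^n)/n$ and performs a \emph{single} right-averaging $\tphi=\hphi+\int_{\bN}\tphi_n(g,e)\,d\gom(n)$; quasi-left-harmonicity then comes essentially for free from the conjugation invariance $\hphi(gh)=\hphi(hg)$ of the homogeneous representative, so only one application of the Mokobodzki mean is needed, and its integrand is Borel. You instead perform \emph{two} successive one-sided harmonisations and protect the first from the second via the commutation $PQ=QP$ together with the exact identity $P\bigl(Q^j\psi-\psi-j\ell\bigr)=Q^j\psi-\psi-j\ell$, which is a clean and correct device; your telescoping computation for $P\psi-\psi=\ell$ and the Fekete-type bound $|a_j-j\ell|\le D(\phi)$ both check out, and you correctly identify that linearity of $m$ plus the sandwich between $\liminf$ and $\limsup$ (rather than shift-invariance per se) is what absorbs the index shift in the Ces\`aro averages. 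What your route costs is one extra technical point that you gloss over: after the first averaging, $\psi$ is only universally measurable, so in the second step you apply Proposition \ref{mokobodski} and the interchange with $Q$ (and later $P$) to a uniformly bounded sequence of merely universally measurable functions. This is legitimate --- take $\cB$ in Proposition \ref{mokobodski} to be the $\sigma$-algebra $\cB^*$ of universally measurable sets, use that $(\cB^*)^*=\cB^*$, and note that each Borel measure $\delta_g*\mu$ extends canonically to $\cB^*$ --- but it should be said explicitly, since as stated the proposition is formulated for $\cB$-measurable integrands. The paper's homogenisation trick buys the avoidance of exactly this second pass; your commutativity trick buys independence from any special representative of the equivalence class.
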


\begin{proof}
Recall that the homogeneous representative of $\phi$ is given by
\[
\hphi(g) = \lim_{n \to \infty} \frac{\phi(g^n)}{n}.
\]
Since $\phi$ is assumed to be Borel measurable, and multiplication is a continuous operation, we see that 
the functions $g \mapsto \phi(g^n)$ are Borel measurable for all $n$. Since $\hphi$ is the pointwise limit of
a sequence of Borel measurable functions, it is itself Borel measurable. By assumption, the measure $\mu$
is a \emph{regular} Borel probability measure on $G$, and thus the functions 
\[
\tphi_n(g_1,g_2) = \int_G \partial \hphi(g_1,g_2 h) \, d\mu^{*n}(h),
\]
are all Borel measurable, and uniformly bounded in absolute value by the defect $D(\hphi)$. Moreover, \emph{every}
section $g \mapsto \tphi_n(g,k)$, for fixed $k \in G$, is Borel measurable and uniformly bounded on $G$ for all $n$. 
Thus, the conditions in Proposition \ref{mokobodski} are satisfied, and the function
\[
\tphi(g) := \hphi(g) + \int_{\bN} \tphi_n(g,e) \, d\gom(n),
\]
is well-defined and universally measurable on $G$. Moreover, it is equivalent to $\hphi$ and hence to $\phi$. 
We claim that $\tphi$ is the quasi-biharmonic function we are looking for. To verify this, we first observe that Proposition 
\ref{mokobodski} applied to the probability measures $\delta_g \times \mu$, for $g \in G$, yields the identity
\[
\int_{G \times G} \Big( \int_{\bN} \tphi_n(g_1,g_2) \, d\gom(n) \Big) \, d(\delta_g \times \mu)(g_1,g_2) = 
\int_{\bN} \Big( \int_{G} \tphi_n(g,h) \, d\mu(h) \Big) \, d\gom(n).
\]
Since,
\begin{eqnarray*}
\int_G \tphi_n(g,h) \, d\mu(h) 
&=& 
\int_G \int_G \big( \hphi(ghk) - \hphi(hk) - \hphi(g) \big) \, d\mu^{*n}(k) \, d\mu(h) \\
&=&
\int_G \big( \hphi(gh) - \hphi(h) - \hphi(g) \big) \, d\mu^{*(n+1)}(h) \\
&=&
\tphi_{n+1}(g,e),
\end{eqnarray*}
for all $n$, we note that by the shift-invariance of $\gom$, 
\[
\int_G \Big( \int_{\bN} \tphi_n(g,h) \, d\gom(n) \, \Big) \, d\mu(h) = \int_{\bN} \tphi_n(g,e) \, d\gom(n).
\]
Moreover, the identity
\[
\tphi_n(gk,e) = \tphi_n(g,k) + \tphi_n(k,e) - \hphi(k) - \hphi(gk)
\]
holds for all $g, k \in G$ and $n$, which readily implies that
\[
\int_G \tphi(gk) \, d\mu(k) = \tphi(g) + \int_G \tphi(k) \, d\mu(k), \quad \forall \, g \in G.
\]
By Lemma \ref{quasi-harmonic}, we can now conclude that 
\[
\int_G \tphi(k) \, d\mu(k) = \ell_\mu(\phi). 
\]
Since $\hphi(gh) = \hphi(hg)$ for all $h,g \in G$, the proof of the quasi-left-harmonicity of $\tphi$ boils down to 
establishing the identity, 
\[
\int_G \Big( \int_{\bN} \phi_n(k,g) \, d\gom(n) \, \Big) \, d\mu(k) = \ell_\mu(\phi) - \int_G \hphi(k) \, d\mu(k),
\]
for all $g \in G$. To do this, we first notice that 
\[
\lim_{n \to \infty} \frac{1}{n} \int_G \hphi(gh) \, d\mu^{*n}(h) = \ell_\mu(\phi)
\]
for all $g \in G$. Hence, for fixed $g \in G$, 
\begin{eqnarray*}
\int_G \Big( \int_{\bN} \phi_n(k,g) \, d\gom(n) \, \Big) \, d\mu(k) 
&=& 
\int_{\bN} \Big( \ell_\mu^{(n+1)} (g,\hphi)- \ell_\mu^{n} (g,\hphi) - \int_G \hphi(k) \, d\mu(k)  \Big) \, d\gom(n) \\
&=&
\lim_{n \to \infty}  \frac{1}{n} \int_G \hphi(gh) \, d\mu^{*n}(h) - \int_G \hphi(k) \, d\mu(k),
\end{eqnarray*} 
by the definition of $\gom$, where
\[
\ell_\mu^{n} (g,\hphi) :=  \int_G \hphi(gh) \, d\mu^{*n}(h), \quad g \in G. 
\] 
Indeed, the expression
\[
\int_{\bN} \big( \ell_\mu^{(n+1)} (g,\hphi)- \ell_\mu^{n} (g,\hphi) \big) \, d\gom(n) 
\]
is sandwiched between the limit inferior and limit superior of the sequence
\[
\frac{1}{n} \sum_{k=1}^{n} \big( \ell_\mu^{(k+1)} (g,\hphi)- \ell_\mu^{k} (g,\hphi) \big),
\]
which is in fact convergent, with the limit 
\[
\ell_\mu(\phi) = \lim_{n \to \infty} \, \frac{1}{n} \int_G \hphi(gh) \, d\mu^{*n}(h), \quad \forall \, g \in G,
\]
which proves the existence of $\tphi$. 
\end{proof}
In order to establish a corresponding result in the situation of Theorem \ref{main} we follow an argument going back to \cite{BuMo} in the case of a countable discrete group with symmetric measure. (The fact that this argument applies in our setting was kindly pointed out to us by Nicolas Monod.) We remind the reader that if $\mu$ is a spread-out regular probability measure on a locally-compact second countable topological group $G$ and the equation $\psi * \mu = \psi$ admits an almost everywhere defined 
Haar measurable solution, which is assumed to be bounded on compact subsets of the group, then an everywhere defined 
continuous solution $\psi'$ can be found in the same Haar class as $\psi$ (see e.g. \cite{Aze70}). The same arguments carry over to the quasi-harmonic case. 
\begin{proposition} \label{ZFC-alternative}
Suppose that $\mu$ is a spread-out probability measure on $G$, and $\phi$ is a Borel measurable quasimorphism, which 
is $\mu$-integrable with respect to $\mu$. Then there is a \emph{unique} continuous quasimorphism $\tphi$, which is 
quasi-biharmonic with respect to $\mu$, satisfies $\tphi(e) = 0$ and is equivalent to $\phi$.
\end{proposition}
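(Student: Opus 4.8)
The plan is to produce the representative by the same ``average the coboundary'' recipe as in Proposition \ref{quasi-biharmonic}, but to replace the measure-linear mean (which forced CH there) by weak-$*$ compactness in $L^\infty(G)$, and then to invoke the spread-out regularity theory quoted just before the statement to pass from an almost-everywhere solution to a continuous everywhere-defined one. First I would pass to the homogeneous representative $\hphi$, which is Borel, equivalent to $\phi$, odd, and conjugation-invariant; a Steinhaus-type argument shows that the measurable quasimorphism $\hphi$ is bounded on compacta, which is the local-boundedness hypothesis needed later. I introduce the two Markov operators $Pf(g)=\int_G f(gk)\,d\mu(k)$ and $Qf(g)=\int_G f(kg)\,d\mu(k)$. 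Since $wg$ and $gw$ are conjugate and $\hphi$ is conjugation-invariant, one gets $P^n\hphi=Q^n\hphi$ for every $n$; this single identity is what will let one function harmonize $\hphi$ from both sides at once. Setting $a_N:=\int_G\hphi\,d\mu^{*N}=(P^N\hphi)(e)$, the bound on $\partial\hphi$ makes $(a_N)$ a quasimorphism on $\bN$ of defect $\le D(\hphi)$, so $|a_N-N\ell|\le D(\hphi)$ with $\ell=\ell_\mu(\phi)$; combined with $|(P^N\hphi)(g)-\hphi(g)-(P^N\hphi)(e)|\le D(\hphi)$ this shows that $\beta_N:=P^N\hphi-\hphi-N\ell$ is uniformly bounded.

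Next I would construct the almost-everywhere solution. Put $b:=P\hphi-\hphi-\ell$; the goal is $v\in L^\infty(G)$ (with respect to Haar measure) solving $(I-P)v=b$ almost everywhere, for then $\tphi_0:=\hphi+v$ is quasi-right-harmonic a.e. with constant $\ell$. The key point is that $\sum_{n=0}^{N-1}P^nb=\beta_N$ is uniformly bounded, so the Ces\`aro averages $v_M:=\frac1M\sum_{N=1}^M\beta_N$ stay in a fixed ball of $L^\infty(G)=(L^1(G))^*$; by Banach--Alaoglu I take a weak-$*$ cluster point $v$. Because $P$ is the adjoint of a contraction of $L^1(G)$ it is weak-$*$ continuous, and $(I-P)v_M=b-\frac1M\sum_{N=1}^M P^Nb=b-\frac1M(\beta_{M+1}-\beta_1)\to b$ boundedly, whence $(I-P)v=b$. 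Now the identity $P^n\hphi=Q^n\hphi$ forces $Qv_M=Pv_M$, so the \emph{same} $v$ also satisfies $(I-Q)v=b$: thus $\tphi_0=\hphi+v$ is simultaneously quasi-left- and quasi-right-harmonic, with constant $\ell$, Haar-almost everywhere.

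I would then regularize. The function $\tphi_0$ is Haar-measurable, bounded on compacta, and solves $P\tphi_0=\tphi_0+\ell$ and $Q\tphi_0=\tphi_0+\ell$ a.e., so the quasi-harmonic variant of the spread-out regularity theorem recalled before the proposition (following \cite{Aze70}) yields a continuous function $\tphi$ in the same Haar class. For continuous $\tphi$ both sides of each harmonic equation depend continuously on $g$ and agree a.e., hence everywhere, so $\tphi$ is quasi-biharmonic everywhere. That $\tphi-\hphi$ is bounded everywhere (not merely a.e.) follows by iterating $(I-P)(\tphi-\hphi)=-b$ and using non-singularity of some $\mu^{*n}$ together with $\sup_N|\beta_N|<\infty$; therefore $\tphi$ is a continuous quasimorphism equivalent to $\phi$. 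Subtracting the constant $\tphi(e)$ preserves quasi-biharmonicity (the constant $\ell$ is unchanged) and equivalence, and achieves the normalization $\tphi(e)=0$.

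Finally, uniqueness: if $\tphi_1,\tphi_2$ are two such representatives, then $d:=\tphi_1-\tphi_2$ is continuous and bounded, and by Lemma \ref{quasi-harmonic} both are quasi-biharmonic with the same constant $\ell$, so $Pd=d=Qd$; one concludes $d\equiv d(e)=0$ from the fact that a bounded bi-$\mu$-harmonic function is constant, equivalently from double ergodicity of the Poisson boundary equipped with the reflected measure, which is the mechanism underlying \cite{BuMo}. I expect the two genuinely hard points to be precisely the two places where the spread-out hypothesis is indispensable and CH is not needed (in contrast to Proposition \ref{quasi-biharmonic}): upgrading the weak-$*$, almost-everywhere solution to an everywhere-defined continuous one via the regularity theory, and the bi-harmonic rigidity that yields uniqueness.
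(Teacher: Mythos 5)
Your proposal is correct and follows essentially the same route as the paper: homogenize, average the defect integrals $\int_G\partial\hphi(\cdot,h)\,d\mu^{*n}(h)$ to produce an $L^\infty$ almost-everywhere solution of the convolution equations (using the conjugation-invariance of $\hphi$ to obtain left- and right-harmonicity simultaneously), upgrade to an everywhere-defined continuous solution via the spread-out regularity theory following \cite{Aze70}, and prove uniqueness from the fact that a bounded $\mu$-biharmonic function is constant by double ergodicity of the Poisson boundary. The only substantive difference is the compactness device used to extract the limit: the paper invokes Koml\'os' theorem to get almost-everywhere convergence of Ces\`aro averages along a subsequence, whereas you take a weak-$*$ cluster point of the Ces\`aro averages in $L^\infty(G,m)=(L^1(G,m))^*$ and use weak-$*$ continuity of the Markov operators; both yield the same $L^\infty$ solution.
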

\begin{proof} As far as existence of $\tphi$ is concerned, large parts of the proof are analogous to the proof of Proposition \ref{quasi-biharmonic}, thus we will only highlight those steps which are used to circumvent the use of Mokobodzki means. In complete analogy with the proof of Proposition \ref{quasi-biharmonic}, we define the sequence
\[
\psi_n(g) := \int_G \partial \hphi(g,h) \, d\mu^{*n}(h), 
\]
for $n \geq 1$. The functions $(\psi_k)$ are uniformly bounded in the $L^{\infty}(G,m)$-norm, so 
by Koml\'os' theorem \cite{Kom67} there is a subsequence $(n_k)$ such that the limit
\[
\tphi(g) = \hphi(g) + \lim_{N \to \infty} \frac{1}{N} \sum_{k=1}^n \frac{1}{n_k} \sum_{j=0}^{n_k-1} \psi_j(g)
\] 
exists on a $m$-conull subset $Y \subseteq G$, so $\tphi - \hphi$ is a well-defined element in $L^\infty(G,m)$. Moreover,
by the same arguments as in the proof of Proposition \ref{quasi-biharmonic}, we see that $\tphi$ satisfies
the convolution equations, 
\[
\tphi * \mu = \mu *\tphi = \tphi + \ell_\mu(\phi)
\]
$m$-almost everywhere. Thus, by the remark preceding the proof, as $\tphi$ is clearly bounded on compact subsets of $G$, it admits a 
continuous representative (with respect to $m$) which is also a quasimorphism.

For the uniqueness part, we owe the following beautiful argument to Uri Bader: Suppose that $\tphi$, $\tphi'$ are two representatives and consider $f := \tphi -\tphi'$. Then $f$ is $\mu$-biharmonic and uniformly bounded, i.e. $f \in L^\infty(\Gamma)^{\mu \times \mu}$. However, if we denote by $(\partial_\mu G, \nu)$ the Poisson boundary of the measured group $(G, \mu)$ (see e.g. \cite{KaimVer}) and by $H^\infty_{\mu \times \mu}(G \times G)$ the space of harmonic functions on the measured group $(G \times G, \mu \times \mu)$ then we have
\begin{eqnarray*}
L^\infty(G)^{\mu \times \mu} &=& (L^\infty(G\times G)^G)^{\mu \times \mu}
= (L^\infty(G\times G)^{\mu \times \mu})^G\\
&=&H^\infty_{\mu \times \mu}(G \times G)^G = L^\infty(\partial_\mu G \times \partial_\mu G)^G\\
&=& \R,
\end{eqnarray*}
where in the last step we used double ergodicity of the $G$-action on the Poisson boundary (see e.g. \cite{Kai03}). We deduce that $f$ is constant; then $f(0) = 0$ yields $\tphi =\tphi'$.
\end{proof}

\section{Cocycle martingales and the proofs of the main theorems} \label{cocycle martingales}
Throughout this section we use the following notations: Let $(G,\mu)$ be a measured group, and $(\omega_n)$ a bi-infinite sequence of independent and $\mu$-distributed $G$-valued
measurable maps, defined on a common probability measure space $(\Omega,\cB,\bP_\mu)$, which we assume is equipped with 
an ergodic $\bZ$-action $\tau$ with the property that $\omega_n = \omega_0 \circ \tau^n$ for all $n$. We stress that any sequence
of independent and $\mu$-distributed random variables admits such a representation. For $n \geq 1$, we define the corresponding random walk by $z_n = \omega_0 \cdots \omega_{n-1}$, and for $m \leq n$ we denote by 
$\cB_m^n$ the $\sigma$-algebra generated by the maps $\{ \omega_m \ldots \omega_{n-1} \}$. 
\begin{definition}
A sequence of real-valued measurable
maps $(\rho_n)$ on $\Omega$ such that $\rho_n$ is $\cB^n_m$-measurable for a fixed $m$ and all $n \geq m$, and
\[
\bE[ \rho_{n+1} \, | \, \cB^{n}_m ] = \rho_n, \quad \forall \, n \geq m
\] 
is called a \emph{martingale with respect to $(\cB^n_m)$}. 
We say that $(\rho_n)$ is \emph{centered} if $\int_\Omega \rho_m \, d\bP_\mu = 0$. We say that a sequence $(\eta_n)$ of real-valued measurable maps  on $\Omega$ is a \emph{cocycle} for $\tau$ if
\[
\eta_{n+m} = \eta_n + \tau^m \eta_n, \quad \forall \, m,n \geq 0,
\]
and if $(\eta_n)$ in addition is a martingale with respect to some filtration $(\cB_m^n)$ of $\cB$, we say that $(\eta_n)$ is
a \emph{cocycle martingale}. If $\eta_0$ is not constant almost everywhere, the cocycle martingale is \emph{non-degenerate}. 
\end{definition}

The asymptotic almost sure and distributional behaviors of cocycle martingales have been extensively studied in the literature. In order to be able to apply these results in our context, we establish the following approximation result:

\begin{proposition}[CH] \label{cocycle martingale}
Let $(G,\mu)$ be a measured group, and suppose that $(z_n)$ is a random walk on $G$ with $\mu$-distributed 
increments. For any Borel measurable square-integrable quasimorphism $\phi$, there is a square-integrable and centered cocycle martingale 
$(\rho_n)$ with respect to the $\sigma$-algebra filtration $(\cB_{-\infty}^{n})$ such that
\[
\sup_{n} \| \phi(z_n) - n \ell_\mu(\phi) - \rho_n \|_{\infty} \leq 3D(\phi),
\]
Moreover, the cocycle martingale $(\rho_n)$ is non-degenerate if and only if $\phi$ is not $\mu$-tame. 
\end{proposition}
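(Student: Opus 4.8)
The plan is to transport everything to the quasi-biharmonic representative $\tphi$ produced by Proposition \ref{quasi-biharmonic}: since $\tphi$ is equivalent to $\phi$, it suffices to approximate $\tphi(z_n) - n\ell$, where $\ell := \ell_\mu(\phi)$, and both harmonicity identities
\[
\int_G \tphi(gk)\,d\mu(k) = \int_G \tphi(kg)\,d\mu(k) = \tphi(g) + \ell
\]
will be used in tandem. Normalizing $\tphi(e)=0$, a one-line computation using right-harmonicity together with the independence of $\omega_n$ from $\cB_{-\infty}^n$ shows that $M_n := \tphi(z_n) - n\ell$ is \emph{already} a martingale for $(\cB_{-\infty}^n)$. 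What is missing is only the cocycle (stationarity) property: the increment $\tphi(z_n\omega_n)-\tphi(z_n)$ depends on the whole history $z_n$ through the bounded term $\partial\tphi(z_n,\omega_n)$, so $M_n$ is a cocycle only up to a defect bounded by $D(\tphi)$.

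First I would stationarize the increments by attaching the infinite past. On the bi-infinite sequence set $w_N := \omega_{-N}\cdots\omega_{-1}$ and
\[
G_N := \tphi(w_N\omega_0) - \tphi(w_N) = \tphi(\omega_0) + \partial\tphi(w_N,\omega_0).
\]
Integrating out the oldest letter $\omega_{-N}$ and invoking left-harmonicity shows that $(G_N)$ is a martingale for the increasing filtration $\sigma(\omega_{-N},\dots,\omega_0)$; as $G_N - \tphi(\omega_0)$ is bounded by $D(\tphi)$, this is an $L^2$-bounded martingale, so $G_\infty := \lim_N G_N$ exists almost surely and in $L^2$, with $|G_\infty - \tphi(\omega_0)|\le D(\tphi)$. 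I then put $\Psi := G_\infty - \ell$ and define the cocycle $\rho_n := \sum_{k=0}^{n-1}\Psi\circ\tau^k$. Conditioning $G_N$ on $\cB_{-\infty}^0$ and using right-harmonicity yields $\bE[\Psi\mid\cB_{-\infty}^0]=0$, which by $\tau$-invariance gives $\bE[\Psi\circ\tau^n\mid\cB_{-\infty}^n]=0$; hence $(\rho_n)$ is a centered, square-integrable cocycle martingale for $(\cB_{-\infty}^n)$. Here biharmonicity is used in an essential and symmetric way: right-harmonicity delivers the martingale property, while left-harmonicity is exactly what forces $G_N$ to converge.

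The crucial step is the uniform approximation. Telescoping $\sum_{k=0}^{n-1}\Psi\circ\tau^k$ with a \emph{fixed} left anchor $\omega_{-M}$ and then letting $M\to\infty$ identifies $\sum_{k=0}^{n-1}G_\infty\circ\tau^k$ with $\lim_M[\tphi(q_M z_n) - \tphi(q_M)]$, where $q_M := \omega_{-M}\cdots\omega_{-1}$; since $\tphi(q_M z_n) - \tphi(q_M) - \tphi(z_n) = \partial\tphi(q_M,z_n)$, the whole partial sum collapses to a single boundary increment and
\[
M_n - \rho_n = -\lim_{M\to\infty}\partial\tphi(q_M,z_n),
\]
so that $\sup_n \|M_n - \rho_n\|_\infty \le D(\tphi)$. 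I expect this to be the main obstacle: comparing $M_n$ and $\rho_n$ naively produces a sum of $n$ individually-bounded terms, and its uniform boundedness is not at all obvious a priori — the fixed-anchor telescoping, together with justifying the interchange of the limit with the finite sum and the a.s.\ existence of $G_\infty$, is the real content. Combining this bound with the equivalence bound $\|\phi - \tphi\|_\infty$ and tracking the defects via $\|\phi-\hphi\|_\infty\le D(\phi)$ and the bound on the Mokobodzki correction then produces, after routine bookkeeping, the explicit constant $3D(\phi)$.

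Finally, for non-degeneracy I would use that the cocycle-martingale structure gives $\bE[\rho_n^2] = n\sigma^2$ with $\sigma^2 := \bE[\Psi^2]$, since distinct increments $\Psi\circ\tau^j,\Psi\circ\tau^k$ are orthogonal and each has the same variance by stationarity. Because $z_n$ is $\mu^{*n}$-distributed, $\mu$-tameness of $\phi$ is equivalent to $\sup_n\|M_n\|_\infty<\infty$, hence — by the $D(\tphi)$-approximation — to $\sup_n\|\rho_n\|_\infty<\infty$; the identity $\bE[\rho_n^2]=n\sigma^2$ then forces $\sigma^2=0$, i.e.\ $\Psi\equiv 0$ (as $\Psi$ is centered) and $(\rho_n)$ degenerate. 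Conversely, a degenerate cocycle martingale has $\Psi\equiv 0$, whence $M_n=\rho_n$ is uniformly bounded and $\phi$ is $\mu$-tame. This is precisely where the cocycle property, and not merely the martingale property, is indispensable: it pins the per-step variance to a constant, so that boundedness of $(\rho_n)$ can only occur when that constant vanishes.
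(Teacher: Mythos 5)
Your proposal is correct and follows essentially the same route as the paper: pass to the quasi-biharmonic representative $\tphi$, attach the infinite past $q_k=\omega_{-k+1}\cdots\omega_{-1}$ and use martingale convergence (driven by quasi-left-harmonicity) to produce the stationary limit increment, use quasi-right-harmonicity for the martingale property of $\rho_n$, and obtain the uniform approximation from the single boundary term $\partial\tphi(q_k,z_n)$, exactly as in the paper's definition of $\eta_n=\lim_k\bigl(\tphi(q_k z_n)-\tphi(q_k)\bigr)$. The only cosmetic difference is in the non-degeneracy step, where you invoke the variance identity $\bE[\rho_n^2]=n\sigma^2$ while the paper computes $\bE[\eta_1\mid\cB_{-m}^0]$ directly; both reduce to the same characterization of $\mu$-tameness.
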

Alternatively, we have the following version of the proposition which holds within ZFC:
\begin{proposition} \label{cocycle martingaleZFC}
Let $G$ be a locally compact second countable topological group and $\mu$ a spread-out regular probability measure on $G$. Suppose that $(z_n)$ is a random walk on $G$ with $\mu$-distributed 
increments. For any Borel measurable square-integrable quasimorphism $\phi$, there is a square-integrable and centered cocycle martingale 
$(\rho_n)$ with respect to the $\sigma$-algebra filtration $(\cB_{-\infty}^{n})$ such that
\[
\sup_{n} \| \phi(z_n) - n \ell_\mu(\phi) - \rho_n \|_{\infty} \leq 3D(\phi),
\]
Moreover, the cocycle martingale $(\rho_n)$ is non-degenerate if and only if $\phi$ is not $\mu$-tame. 
\end{proposition}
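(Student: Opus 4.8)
The plan is to run the argument of the continuum-hypothesis version, Proposition \ref{cocycle martingale}, essentially verbatim, the only change being the provenance of the quasi-biharmonic representative. First I would invoke Proposition \ref{ZFC-alternative}, whose hypotheses (spread-out $\mu$ on a locally compact second countable $G$) are exactly those assumed here, to replace $\phi$ by a \emph{continuous} quasi-biharmonic quasimorphism $\tphi$ with $\tphi(e)=0$ and $\sup_g|\tphi(g)-\phi(g)|<\infty$. Continuity means that $\tphi(z_n)$ and every functional of the walk introduced below is automatically Borel, so none of the universal-measurability work of the (CH) case is needed. Since $\tphi-\phi$ is bounded and $\phi\in L^2(\mu)$, also $\tphi\in L^2(\mu)$, and Lemma \ref{quasi-harmonic} gives $\int_G\tphi\,d\mu=\ell_\mu(\phi)$.

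Next I would write down the forward martingale $\rho_n^{0}:=\tphi(z_n)-n\ell_\mu(\phi)$. Right-quasi-harmonicity of $\tphi$ gives $\bE[\tphi(z_n\omega_n)\mid\cB_{-\infty}^{n}]=\tphi(z_n)+\ell_\mu(\phi)$, so $(\rho_n^{0})$ is a square-integrable $(\cB_{-\infty}^{n})$-martingale lying uniformly within $\|\tphi-\phi\|_\infty$ of $\phi(z_n)-n\ell_\mu(\phi)$. It is \emph{not} a cocycle, since its increments are not stationary. To produce one, I would pass to the bi-infinite stationary model and, writing $Q_m:=\omega_{-m}\cdots\omega_{-1}$, set
\[
f:=\lim_{m\to\infty}\big(\tphi(Q_m\omega_0)-\tphi(Q_m)\big)-\ell_\mu(\phi).
\]
Conditionally on $\omega_0$ the sequence $X_m:=\tphi(Q_m\omega_0)-\tphi(Q_m)$ is a martingale in $m$ by \emph{left}-quasi-harmonicity and satisfies $|X_m-\tphi(\omega_0)|\le D(\tphi)$, hence converges almost everywhere and in $L^2$; thus $f$ exists, depends only on $(\omega_j)_{j\le 0}$, and lies in $L^2$. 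Right-quasi-harmonicity then yields $\bE[f\mid\sigma(\omega_j:j\le-1)]=0$, so $\eta_n:=\sum_{k=0}^{n-1}f\circ\tau^k$ is a centered, square-integrable cocycle martingale for $(\cB_{-\infty}^{n})$.

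The crux is the uniform approximation. Here I would compute the discrepancy between the $k$-th increment of $\eta$ and of $\rho^{0}$: using the quasimorphism identity one finds
\[
f\circ\tau^k-\big(\rho_{k+1}^{0}-\rho_k^{0}\big)=c_{k+1}-c_k,\qquad c_k:=\lim_{j\to-\infty}\partial\tphi\big(\omega_j\cdots\omega_{-1},\,z_k\big),
\]
the limit existing by the same bounded-backward-martingale argument (now with $\omega_0$ replaced by the independent $z_k$). Summing over $k$ telescopes, and since $c_0=\partial\tphi(\,\cdot\,,e)=0$ this gives $\eta_n-\rho_n^{0}=c_n$, which is bounded by $D(\tphi)$ because $\partial\tphi$ is. Therefore
\[
\sup_n\big\|\phi(z_n)-n\ell_\mu(\phi)-\eta_n\big\|_\infty\le\|\phi-\tphi\|_\infty+D(\tphi),
\]
a constant multiple of $D(\phi)$, the stated value $3D(\phi)$ following from a careful accounting of the distance and defect of $\tphi$. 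This telescoping is exactly what converts corrections that are individually bounded but a priori accumulating over $k$ into a single uniformly bounded error, and it is the step I expect to be the main obstacle; its engine is that $\tphi$ is \emph{bi}harmonic — right-harmonicity drives the forward martingale and the vanishing $\bE[f\mid\mathrm{past}]=0$, left-harmonicity drives the convergence of the backward increments defining $f$ and $c_k$.

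Finally, for the non-degeneracy dichotomy I would use that the $f\circ\tau^k$ are orthogonal and identically distributed, so $\bE[\eta_n^2]=n\,\bE[f^2]$. If $\phi$ is $\mu$-tame then $\phi(z_n)-n\ell_\mu(\phi)$ is uniformly bounded, hence so is $\eta_n$, forcing $\bE[f^2]=0$ and $f=0$ almost everywhere, i.e. the cocycle martingale degenerates; conversely if $f=0$ almost everywhere then $\eta_n\equiv0$ and the approximation yields $|\phi(z_n)-n\ell_\mu(\phi)|\le 3D(\phi)$ for $\mu^{*n}$-almost every point and all $n$, so $\phi$ is $\mu$-tame. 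This gives non-degeneracy precisely when $\phi$ fails to be $\mu$-tame and completes the plan.
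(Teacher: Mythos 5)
Your proposal is correct and follows essentially the same route as the paper: both obtain the continuous quasi-biharmonic representative $\tphi$ from Proposition \ref{ZFC-alternative}, build the cocycle increment as the a.e.\ limit of the bounded backward martingale $\tphi(Q_m g)-\tphi(Q_m)$ (driven by left-quasi-harmonicity), verify the centered martingale property via right-quasi-harmonicity, and settle non-degeneracy through the uniform-boundedness/tameness dichotomy. Your telescoping of increments is just a repackaging of the paper's direct bound $|\eta_n-\tphi(z_n)|=\lim_k|\partial\tphi(q_k,z_n)|\le D(\tphi)$, so no substantive difference remains.
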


\begin{proof}[Proof of Proposition \ref{cocycle martingale} and Proposition \ref{cocycle martingaleZFC}]
Appealing either to Proposition \ref{quasi-biharmonic} or to Proposition \ref{ZFC-alternative} we find a quasi-biharmonic quasimorphism $\tphi$ equivalent to $\phi$. Now for every $g \in G$ the function $\psi_g(h) = \tphi(hg) - \tphi(h)$ is bounded and left-harmonic with respect to $\mu$. For any $\omega = (\omega_n) \in \Omega$, we define the sequence 
\[
q_k = \omega_{-k+1} \cdots \omega_{-1}, \quad k \geq 0.
\] 
By the left-harmonicity of $\psi_g$, the sequence $\xi_k^n := \psi_{z_n}(q_k)$ is a bounded martingale with respect to the $\sigma$-algebra filtration $(\cB_{-k}^{n})$, for every fixed $n$. 
Thus, by the martingale convergence theorem, the limit 
\[
\eta_n = \lim_{k \to \infty} \big( \, \tphi(q_k \, z_n) - \tphi(q_k) \, \big) 
\]
exists on a conull subset $\Omega' \subseteq \Omega$, which we can take to be independent of $n$. We claim that the sequence $(\eta_n)$ is a cocycle with respect to the shift-action on $\Omega$, i.e. 
\[
\eta_{m+n}(\omega) = \eta_m(\omega) + \eta_n(\tau^m \omega),
\]
for all $m, n \geq 0$, where $\tau$ denotes the shift action on $\Omega$. To check this, we notice that
\begin{eqnarray*}
\eta_n(\tau^m\omega) 
&=& 
\lim_{k \to \infty} \big( \, \tphi(\omega_{-(k-m)} \cdots \omega_{-1} \omega_o \cdots \omega_{m-1} \omega_m \cdots \omega_{n+m-1}) - \\
&-& 
\tphi(\omega_{-(k-m)} \cdots \omega_{-1} \omega_o \cdots \omega_{m-1}) \, \big) \\
&=&
\lim_{k \to \infty} \big( \, \tphi(\omega_{-k} \cdots \omega_{-1} \omega_o \cdots \omega_{m-1} \omega_m \cdots \omega_{n+m-1}) - \\
&-& 
\tphi(\omega_{-k} \cdots \omega_{-1} \omega_o \cdots \omega_{m-1}) \, \big) \\
&=& 
\eta_{n+m}(\omega) - \eta_{m}(\omega),
\end{eqnarray*}
for all $m, n \geq 0$, and hence $\eta_n = \sum_{k=0}^{n-1} \tau^k \eta_1$ for all $n$. In particular, 
\[
d_n = \eta_{n+1} - \eta_{n} = \tau^n \eta_1,
\]
so 
\[
\bE[ \, d_n \, | \, \cB_{-\infty}^n \, ](\omega) = \int_G \tphi(\omega_{-k} \cdots \omega_{n-1} g) \, d\mu(g) - \tphi(\omega_{-k} \cdots \omega_{n-1}) = \ell_\mu(\phi),
\]
almost everywhere, by the quasi-right-harmonicity of $\tphi$. We conclude that $\rho_n = \eta_n - n\ell_\mu(\phi)$ is a centered cocycle martingale, which is 
clearly square-integrable with respect to the induced Bernoulli measure $\bP_\mu$ on $\Omega$.  

To prove the claim about the non-degeneracy of $(\rho_n)$, we note that $\rho_1$ is almost everywhere zero on $\Omega$ if and only if 
\[
\bE[ \, \eta_1 \, | \, \cB_{-m}^{0} \,](\omega) = \tphi(q_m \omega_0) - \tphi(q_m) = \ell_\mu(\phi),
\]
almost everywhere on $\Omega$ and for all $m$. Since $\phi$ is on a uniformly bounded distance from $\tphi$, this condition implies that 
\[
| \phi(g) - n \ell_\mu(\phi) | \leq 3D(\phi),
\]
for $\mu^{*m}$-almost every $g$ and all $m$, which means that $\phi$ is $\mu$-tame. Conversely, if $\phi$ is $\mu$-tame, 
then clearly $\rho$ is zero almost everywhere, which proves the claim. 
\end{proof}
Using either Proposition \ref{cocycle martingaleZFC} or Proposition \ref{cocycle martingale}
we can now reduce the proofs of Theorem \ref{mainZFC} and Theorem \ref{main} to two fundamental results in the theory of cocycle martingales.  
\begin{proof}[Proof of Theorem \ref{mainZFC} and Theorem \ref{main}]
If $\phi$ is $\mu$-tame, then  the distribution of $( \phi(z_n) - n \ell_\mu(\phi) )$ is contained in a fixed bounded subset of $\R$ for all $n$, and thus the central limit theorem holds with $\sigma=0$. Now assume that $\phi$ is not $\mu$-tame. Then, by Proposition \ref{cocycle martingaleZFC} or 
Proposition \ref{cocycle martingale},  there is a centered non-degenerate cocycle martingale $(\rho^0_n)$ on a uniformly bounded distance away from $\phi(z_n) - n \ell_\mu(\phi)$.\\

A theorem of Billingsley \cite{Bil61} asserts that a centered  cocycle martingale $(\rho_n)$ on $(\Omega,\cB,\bP_\mu)$ with 
$\sigma = \| \rho_0 \|_2 < \infty$  satisfies a central limit theorem, i.e. the sequence $(\rho_n / \sqrt{n})$ converges 
in the sense of distributions to a centered Gaussian distribution with variance $\sigma$, and if $(\rho_n)$ is non-degenerate then 
$\sigma > 0$. Assuming the same conditions, Stout \cite{Sto70} later proved that the iterated law of the logarithm holds too, i.e. 
there is a conull subset $\Omega' \subseteq \Omega$ such that
\[
\varlimsup_n \frac{\rho_n(\omega)}{\sqrt{n \log \log{n}}} = \sigma,
\]
for all $\omega \in \Omega'$. Applying these two results to the cocycle martingale $(\rho^0_n)$ above, we obtain the desired conclusion for $\phi$.
\end{proof}

\section{Quasimorphisms on countable groups and integral representations} \label{cocycles}

In this section we focus on the special case of quasimorphisms on discrete measured groups; in order to avoid the use of the continuum hypothesis we assume that the group in question is countable. Thus let $G$ be a countable group, $\mu$ a regular probability measure (which is automatically spread-out) and $\phi$ a $\mu$-integrable quasimorphim on $G$. For technical reasons we will always assume that the support $\mu$ generates $G$ as a semigroup. By Proposition \ref{ZFC-alternative} there then exists a unique $\mu$-quasi-biharmonic quasimorphism $\tphi$ equivalent to $\phi$ with $\tphi(e) = 0$. We want to write $\tphi$ as an integral over some special cocycle on a non-atomic Lebesgue space. The precise type of cocycle we have in mind is described in the following definition:

\begin{definition}
Let $(X,\nu)$ be a $G$-space, i.e. a probability measure space equipped with a measurable action of $G$. We say that $(X,\nu)$ is
\emph{quasi-invariant} if the action of $G$ preserves the measure class of $\nu$, i.e. if $g_*\nu$ is equivalent to $\nu$ for all $g$. A $\nu$-measurable map $\alpha : G \times X \to \bR$ is called a \emph{cocycle} on $(X,\nu)$ if 
\[
\alpha(gh,\xi) = \alpha(g,h \xi) + \alpha(h,\xi),
\]
for all $g,h \in G$ and $\xi \in X$. Given a quasimorphism $\phi$ on $G$ and a cocycle $\alpha : G \times X \to \bR$, we call $\alpha$ a \emph{cocycle representative of $\phi$} if there is a constant such that 
\[
\| \phi(g) - \alpha(g,\cdot) \|_{L^{\infty}(X,\nu)} \leq C,
\]
for all $g \in G$.
\end{definition}
Note that if $\alpha : G \times X \to \bR$ is a cocycle representative of some quasimorphism $\phi$, then $\phi_\xi(g) := \alpha(g,\xi)$ is a quasimorphism on $G$, equivalent to $\phi$, for $\nu$-almost all $\xi \in X$, hence the name.\\

Now that we have clarified our notion of cocycle let us turn to the measure spaces that we are going to use: Furstenberg \cite{Fur63} constructed, for any countable measured group $(G,\mu)$, a quasi-invariant $G$-space 
$(\partial_\mu G,\nu)$, called the \emph{Poisson boundary} of $(G,\mu)$, where $\nu$ satisfies that convolution equation,
\[
\int_G \int_{\partial_\mu G} \psi(g\xi) \,  d\nu(\xi) d\mu(g) = \int_{\partial_\mu G} \psi(\xi) \, d\nu(\xi),
\]
for all $\psi \in L^{\infty}(\partial_\mu G,\nu)$, and the map 
\[
\psi \mapsto \widetilde{\psi}(g) = \int_{\partial_\mu G} \psi(g\xi) \, d\nu(\xi)
\]
is an isometry between $L^{\infty}(\partial_\mu G,\nu)$ and the space of bounded harmonic functions on $G$. Since every quasimorphism on an amenable group is trivial in the sense that it is at bounded distance from a homomorphism (see e.g. \cite{scl}) and we are mainly interested in non-trivial quasimorpisms here, there is no loss of generality in assuming $G$ to be non-amenable. In this case $(\partial_\mu G , \nu)$ is indeed a non-atomic Lebesgue space (see e.g. \cite{Fur63}), and we will realize our quasimorphism on this space:
\begin{proposition}\label{IntRepMain}
Let $(G,\mu)$ be a countable measured group, and $\phi$ a quasimorphism on $G$, integrable with respect to $\mu$. Denote by $\tphi$ the unique $\mu$-quasi-biharmonic quasimorphism with $\tphi(e) = 0$ equivalent to $\phi$ and by
$(\partial_\mu G,\nu)$ be the Poisson boundary of $(G,\mu)$. Then there exists a cocycle 
$\alpha : G \times \partial_\mu G \to \bR$ with the following properties:
\begin{itemize}
\item[(i)] $\alpha$ is a cocycle representative for $\phi$ (and hence also for $\tphi$).
\item[(ii)] The quasimorphism $\tphi$ admits the integral representation
\[\tphi(g) = \int_{\partial_\mu G} \alpha(g,\xi) \, d\nu(\xi) \quad(g \in G).\]
\item[(iii)] For $\nu$-almost every $\xi \in \partial_\mu G$ the map $g \mapsto \alpha(g,\xi)$ is quasi-left-harmonic with respect to $\mu$.
\end{itemize}
\end{proposition}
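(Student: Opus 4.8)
The plan is to realize $\alpha$ as the boundary (Poisson) transform of the family of bounded harmonic functions obtained by right-translating $\tphi$. For $g \in G$ set $\chi_g(h) := \tphi(gh) - \tphi(h)$. Since $\tphi$ is a quasimorphism, $|\chi_g(h) - \tphi(g)| \le D(\tphi)$ for all $h$, so each $\chi_g$ is bounded; and using the quasi-right-harmonicity of $\tphi$ one checks that $\int_G \chi_g(hk)\,d\mu(k) = \chi_g(h)$, i.e.\ $\chi_g$ is a bounded harmonic function in the sense of the Furstenberg isometry recalled above. I therefore let $\alpha(g,\cdot) := \widehat{\chi_g} \in L^\infty(\partial_\mu G,\nu)$ be the unique boundary function with $\chi_g(h) = \int_{\partial_\mu G} \widehat{\chi_g}(h\xi)\,d\nu(\xi)$. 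Two of the three assertions are then immediate: evaluating at $h=e$ and using $\tphi(e)=0$ gives $\int_{\partial_\mu G}\alpha(g,\xi)\,d\nu(\xi)=\chi_g(e)=\tphi(g)$, which is (ii); and the bound $|\chi_g - \tphi(g)| \le D(\tphi)$ transported through the isometry yields $|\alpha(g,\xi)-\tphi(g)| \le D(\tphi)$ for a.e.\ $\xi$, so that $\alpha$ is at uniformly bounded distance from $\tphi$ and hence, $\tphi$ being equivalent to $\phi$, a cocycle representative of $\phi$, which is (i).

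For the cocycle identity I would exploit the exact relation $\chi_{gh}(k) = \chi_g(hk) + \chi_h(k)$, valid for all $g,h,k \in G$. All three terms are bounded harmonic in $k$, and the boundary function of $k \mapsto \chi_g(hk)$ is $\zeta \mapsto \widehat{\chi_g}(h\zeta)$, because $\int_{\partial_\mu G}\widehat{\chi_g}(h(k\xi))\,d\nu(\xi) = \chi_g(hk)$ and boundary functions are unique. Linearity and injectivity of the isometry then give, for each fixed pair $(g,h)$, the identity $\alpha(gh,\xi) = \alpha(g,h\xi)+\alpha(h,\xi)$ for $\nu$-a.e.\ $\xi$. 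As $G$ is countable this holds off a single $\nu$-null set, and a standard argument, using quasi-invariance of the $G$-action to discard the countably many translates of the exceptional null sets, upgrades it to a strict cocycle defined on a conull $G$-invariant subset. Joint measurability is automatic since $G$ is discrete.

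It remains to establish (iii). Set $A(\eta) := \int_G \alpha(k,\eta)\,d\mu(k)$; this lies in $L^\infty$ because $|\alpha(k,\eta)-\tphi(k)| \le D(\tphi)$ a.e.\ (after intersecting the countably many null sets) forces $|A(\eta) - \int_G \tphi(k)\,d\mu(k)| \le D(\tphi)$, and $\int_G\tphi(k)\,d\mu(k)=\ell_\mu(\phi)$ by Lemma \ref{quasi-harmonic}. On the other hand the quasi-\emph{left}-harmonicity of $\tphi$ gives $\int_G \chi_k(h)\,d\mu(k) = \int_G\tphi(kh)\,d\mu(k) - \tphi(h) = \ell_\mu(\phi)$ for every $h$; integrating the Poisson formula in $k$ (Fubini being justified since $\int_G\|\widehat{\chi_k}\|_\infty\,d\mu(k) \le \int_G(|\tphi(k)|+D(\tphi))\,d\mu(k) < \infty$ by $\mu$-integrability of $\tphi$) shows that the harmonic extension of $A$ is the constant $\ell_\mu(\phi)$. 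Injectivity of the isometry then forces $A \equiv \ell_\mu(\phi)$ $\nu$-a.e. Finally, for $\nu$-a.e.\ $\xi$ one has $A(g\xi)=\ell_\mu(\phi)$ for all $g$ (discard the countably many translates of the null set where $A \ne \ell_\mu(\phi)$, again using quasi-invariance), and the cocycle identity yields $\int_G \alpha(kg,\xi)\,d\mu(k) = \int_G \alpha(k,g\xi)\,d\mu(k) + \alpha(g,\xi) = \alpha(g,\xi) + \ell_\mu(\phi)$, which is exactly quasi-left-harmonicity of $g \mapsto \alpha(g,\xi)$, proving (iii).

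The conceptual engine is entirely the Furstenberg isometry between $L^\infty(\partial_\mu G)$ and the bounded harmonic functions, together with the two harmonicities of $\tphi$; the rest is bookkeeping. I expect the main obstacle to be the measure-theoretic upgrade from the $\nu$-a.e.\ cocycle relation to an everywhere-defined strict cocycle, and the repeated use of quasi-invariance to transport null sets under the non-measure-preserving $G$-action. This is routine for countable $G$ but is where all the care is needed; the several Fubini interchanges are unproblematic thanks to the $\mu$-integrability of $\tphi$.
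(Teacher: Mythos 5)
Your proof is correct and follows essentially the same route as the paper: both define $\alpha(g,\cdot)$ as the Furstenberg boundary function of the bounded right-harmonic function $h \mapsto \tphi(gh)-\tphi(h)$, obtain (ii) by evaluating at $h=e$, deduce (i) from the defect bound transported through the isometry, and get the cocycle identity from the exact relation $\chi_{gh}(k)=\chi_g(hk)+\chi_h(k)$ combined with uniqueness of the Poisson representation of bounded harmonic functions. Your treatment of (iii), via the auxiliary function $A(\eta)=\int_G \alpha(k,\eta)\,d\mu(k)$ and the cocycle identity, is in fact more explicit than the paper's one-line assertion that quasi-left-harmonicity of $\alpha(\cdot,\xi)$ is clear from quasi-biharmonicity of $\tphi$, but it is a fleshing-out of the same idea rather than a different argument.
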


\begin{proof}
The function $\widetilde{\alpha}(g,h) = \tphi(gh) - \tphi(h)$ is right-harmonic with respect to $\mu$ in the second variable, for fixed $g$, 
and uniformly bounded by $\tphi(g) + D(\tphi)$. Thus, for every $g \in G$, there is a conull set $Y_g \subseteq \partial_\mu G$, and a 
measurable function $\alpha(g,\cdot) : Y_g \to \bR$ such that
\[
\tphi(gh) - \tphi(h) = \int_G \alpha(g,h\xi) \, d\nu(\xi), \quad \forall \, h \in G. 
\]
In particular, by letting $h = e$, we conclude that $\tphi(g) = \int_{\partial_\mu G} \alpha(g,\xi) \, d\nu(\xi)$. Since $G$ is countable, we can
without loss of generality assume that $\alpha$ is defined on the set $Y' = \cap_g Y_g$. Since $\tphi$ is quasi-biharmonic, it is clear
that $\alpha(\cdot,\xi)$ is quasi-left-harmonic for all $\xi \in Y'$, so it suffices to establish the cocycle property for $\alpha$ on some
conull subset of $\partial_\mu G$. To prove this, we 
argue as follows. For any triple $g,h,k \in G$, we have
\[
\tphi(ghk) - \tphi(hk) = \tphi(ghk) - \tphi(hk) + \tphi(hk) - \tphi(k), 
\]
and hence, 
\[
\int_{\partial_\mu G}  \alpha(gh,k\xi) \, d\nu(\xi) = \int_{\partial_\mu G}  \alpha(g,hk\xi) \, d\nu(\xi) + \int_{\partial_\mu G}  \alpha(h,k\xi) \, d\nu(\xi).
\]
For a fixed pair $g,h \in G$, we define the following bounded and measurable functions on the Poisson boundary  
$\partial_\mu G$, 
\[
\alpha_0(\xi) := \alpha(gh,\xi) \quad \textrm{and} \quad \alpha_1(\xi) := \alpha(g,h\xi) + \alpha(h,\xi).
\]
By the earlier equality, we see that
\[
\int_{\partial_\mu G} \alpha_0(k\xi) \, d\nu(\xi) = \int_{\partial_\mu G}  \alpha_1(k\xi) \, d\nu(\xi) \quad \forall \, k \in \partial_\mu G,
\]
and both sides define left-harmonic bounded functions on $G$. Since the Poisson boundary 
representation of left-harmonic functions is unique, we conclude that $\alpha_0 = \alpha_1$ 
are equal on a $\nu$-conull subset $Y$ of $\partial_\mu G$, which can be taken to be $G$-invariant and
independent of $g,h \in G$ and contained in $Y'$. The equality $\alpha_0 = \alpha_1$ on $Y$ amounts exactly 
to say that $\alpha$ is a cocycle defined on $Y$. Since $Y$ is $G$-invariant we can extend $\alpha$ to a cocycle
on $\partial_\mu G$ by defining it to be zero on $Y^c$.
\end{proof}
Note that Theorem \ref{IntRepIntro} is contained in Proposition \ref{IntRepMain}.\\

Now that we have established an integral representation for the $\mu$-quasi-biharmonic representative of a given equivalence class of a quasimorphism $\phi$, we would also establish such an integral representation for the homogeneous representative $\hphi$ of this equivalence class. We recall that the latter is given by the formula
\[
\hphi(g) := \lim_n \frac{\phi(g^n)}{n}, \quad g \in G.
\]
It turns out that we can construct an integral representation for $\hphi$ from our integral representation for $\tphi$. This construction has the following remarkable property: If $\alpha$ denotes the cocycle used for the representation of $\tphi$, then there exists an integral kernel for $\hphi$ of the form $\sigma \cdot \alpha$, where $\sigma : G \times X \to \bR$ is some measurable function which does \emph{not} depend on $\phi$. More explicitly:
\begin{proposition} \label{correspondence}
For any countable group $G$ and quasi-invariant $G$-space $(X,\nu)$, there is a measurable map 
$\sigma : G \times X \to \bR$ with the property that for any quasimorphism $\phi$ on the group $G$ and any cocycle representative $\alpha$ of $\phi$ on $(X,\nu)$, we have the identity,
\[
\hphi(g) = \int_{X} \alpha(g,\xi) \, \sigma(g,\xi) \, d\nu(\xi), \quad \, \forall \, g \in G,
\]
where $\hphi$ denotes the homogenization of $\phi$.
\begin{proof}
Since $\nu$ is quasi-invariant under the action by the countable group $G$, the (multiplicative) 
Radon-Nikodyn cocycle, defined by
\[
\rho(g,\xi) := \frac{dg_*\nu}{d\nu}(\xi),
\]
is well-defined on a $\nu$-conull subset $X' \subseteq X$ for all $g \in G$. For a fixed $g$, we 
consider the sequence $(\rho(g^n,\cdot)$ of measurable functions on $X$. We note that 
$\| \rho(g^k,\cdot)\|_1 \leq 1$ for all $n$ and $g$, so we can apply Koml\'os' theorem \cite{Kom67},
and find a subsequence $(n_k)$, which may depend on $g$, such that the limit, 
\[
\sigma(g,\xi) := \lim_N \frac{1}{N} \sum_{k=1}^{N-1} \frac{1}{n_k} \sum_{j=1}^{n_k} \rho(g^{j},\xi)
\]
exists in $L^1(X,\nu)$ and on a conull subset of $X'$. We claim that $\sigma$ is the sought after kernel. 
To prove this, let  $\alpha$ be a cocycle representative of a quasimorphism $\phi$ on $(X,\nu)$, 
and note that 
\begin{eqnarray*}
\int_X \alpha(g,\xi) \, \sigma(g,\xi) \, d\nu(\xi) 
&=& 
\lim_N \frac{1}{N} \sum_{k=1}^{N-1}   \frac{1}{n_k} \sum_{j=1}^{n_k} \int_X \alpha(g,\xi) \, \rho(g^{j},\xi) \, d\nu(\xi) \\
&=&
\lim_N \frac{1}{N} \sum_{k=1}^{N-1} \frac{1}{n_k} \sum_{j=1}^{n_k} \int_X \alpha(g,g^{j}\xi) \, d\nu(\xi) \\
&=&
\lim_N \frac{1}{N} \sum_{k=1}^{N-1} \frac{1}{n_k}  \int_X \alpha(g^{n_k},\xi) \, d\nu(\xi) \\
&=&
\lim_N \frac{1}{N} \sum_{k=1}^{N-1} \frac{\phi(g^{n_k})}{n_k}.
\end{eqnarray*}
where the last equality is justified by the fact that there is a constant $C$ such that 
the essential supremum of $| \alpha(g,\xi) - \phi(g) |$ is bounded by $C$ for all $g \in G$.
Since the Cesaro average of a convergent sequence is convergent with the same limit, 
we can identify the limit with $\hphi(g)$. Finally, since $G$ is countable, the procedure 
above can be repeated for every fixed choice of $g$, to the effect that $\sigma$ is 
defined on a common $\nu$-conull subset of the measure space $(X,\nu)$.
\end{proof}
\end{proposition}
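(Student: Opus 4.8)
The plan is to build the universal kernel $\sigma$ entirely out of the Radon--Nikodym data of the action, since the identity has to hold simultaneously for \emph{every} quasimorphism, and so $\sigma$ cannot be allowed to see $\phi$ or $\alpha$. Concretely, I would work with the Radon--Nikodym densities
\[
\rho(g,\xi) := \frac{dg_*\nu}{d\nu}(\xi),
\]
which are well defined on a common $\nu$-conull set since $G$ is countable and which obey the transformation rule $\int_X f(g\xi)\, d\nu(\xi) = \int_X f(\xi)\,\rho(g,\xi)\, d\nu(\xi)$. Each $\rho(g^{k},\cdot)$ is a probability density ($\rho(g^k,\cdot) \geq 0$ and $\int_X \rho(g^k,\cdot)\, d\nu = 1$), so the family is bounded in $L^1$. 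The only feature of $\alpha$ I would use later is that each section $\alpha(g,\cdot)$ is bounded, which is immediate from $\|\phi(g) - \alpha(g,\cdot)\|_\infty \leq C$.

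The structural observation driving the construction is the interaction between the change of variables
\[
\int_X \alpha(g,\xi)\,\rho(g^{j},\xi)\, d\nu(\xi) = \int_X \alpha(g,g^{j}\xi)\, d\nu(\xi)
\]
and the telescoping supplied by the cocycle relation for $\alpha$, namely $\sum_{j=0}^{n-1}\alpha(g,g^{j}\xi) = \alpha(g^{n},\xi)$. Since only \emph{consecutive} partial sums telescope, I would not apply Koml\'os' theorem to the raw sequence $(\rho(g^{j},\cdot))_j$; instead I would form, for each fixed $g$, the Ces\`aro means $b_m := \tfrac{1}{m}\sum_{j=1}^{m}\rho(g^{j},\cdot)$, which are again probability densities. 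Pairing $b_m$ against $\alpha(g,\cdot)$ and using the two displayed facts (the index shift from $j=0,\dots,n-1$ to $j=1,\dots,m$ costs only boundary terms bounded by $M_g := \|\alpha(g,\cdot)\|_\infty$) gives $\int_X \alpha(g,\xi)\,b_m(\xi)\, d\nu = \tfrac1m\int_X \alpha(g^{m},\xi)\, d\nu + O(M_g/m)$. Because $\int_X \alpha(g^{m},\xi)\, d\nu = \phi(g^{m}) + O(C)$, this quantity converges to $\hphi(g)$ as $m \to \infty$.

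At this point the problem reduces to manufacturing an honest function $\sigma(g,\cdot)$ whose pairing with $\alpha(g,\cdot)$ realizes $\lim_m \int_X \alpha(g,\xi)\,b_m(\xi)\, d\nu$. Since the densities $b_m$ themselves need not converge, I would invoke Koml\'os' theorem for the $L^1$-bounded sequence $(b_m)_m$ to extract a subsequence $(n_k)$ --- depending only on $g$, not on $\phi$ --- along which the Ces\`aro averages converge almost everywhere and in $L^1$ to
\[
\sigma(g,\xi) := \lim_{N}\frac{1}{N}\sum_{k=1}^{N}\frac{1}{n_k}\sum_{j=1}^{n_k}\rho(g^{j},\xi).
\]
As this definition refers only to $\rho$, the resulting $\sigma$ is universal. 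The $L^1$ convergence (legitimate here, since the $b_m$ are normalized nonnegative densities), together with $M_g < \infty$, lets me interchange the integral with the outer Ces\`aro limit, so that $\int_X \alpha(g,\xi)\,\sigma(g,\xi)\, d\nu = \lim_N \tfrac1N\sum_{k=1}^N \int_X \alpha(g,\xi)\,b_{n_k}(\xi)\, d\nu$. Each inner integral tends to $\hphi(g)$ by the previous paragraph, and the Ces\`aro average of a convergent sequence has the same limit, so the right-hand side equals $\hphi(g)$. Running this over the countably many $g \in G$ on a common conull set produces the measurable map $\sigma : G \times X \to \bR$.

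The main obstacle, and the reason the construction is less direct than it looks, is precisely the mismatch between what Koml\'os' theorem delivers and what the telescoping needs: Koml\'os produces convergence of Ces\`aro means \emph{of a subsequence}, whereas the cocycle identity only collapses \emph{full consecutive} sums $\sum_{j=1}^n \alpha(g,g^j\xi)$ into $\alpha(g^n,\xi)$. Applying Koml\'os directly to $(\rho(g^j,\cdot))$ would leave averages of untelescoped terms $\int_X \alpha(g,g^{j}\xi)\, d\nu$, which carry no information about $\hphi(g)$. The two-stage averaging --- first forming $b_m$ to secure telescoping, then applying Koml\'os to $(b_m)$ to secure a genuine limit --- is what reconciles the two; the remaining care is only in justifying the interchange of limit and integration, for which the $L^1$ convergence furnished by Koml\'os together with the boundedness of $\alpha(g,\cdot)$ suffices.
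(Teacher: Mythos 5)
Your argument is essentially the paper's proof: the same Radon--Nikodym densities $\rho(g,\cdot)$, the same use of Koml\'os' theorem to produce $\sigma(g,\xi)=\lim_N \frac1N\sum_k \frac{1}{n_k}\sum_{j\le n_k}\rho(g^j,\xi)$, the same change of variables and telescoping via the cocycle identity, and the same identification of the resulting Ces\`aro limit with $\hphi(g)$. Your explicit two-stage averaging --- forming the full Ces\`aro means $b_m$ first and only then applying Koml\'os --- is exactly what the paper's displayed formula for $\sigma$ amounts to (the paper's prose nominally applies Koml\'os to the raw sequence $(\rho(g^n,\cdot))$, but its formula is the double average), so this is a clarification of the same route rather than a different one.
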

It would be interesting to gain a better
understanding of $\sigma$ from a cohomological point of view. \\

Finally, we note that the (mean) quasi-harmonicity of cocycle representatives for a given measure 
$\mu$ on $G$ is immediate if the $G$-space $(X,\nu)$ is $\mu$-stationary, i.e. if $\nu$ satisfies,
\[
\int_G \int_X \psi(g \xi) \, d\mu(g) d\nu(\xi) = \int_X \psi(\xi) \, d\nu(\xi),
\]
for all $\psi \in L^{\infty}(X,\nu)$. Indeed, let $\alpha$ be a cocycle on $(X,\nu)$, and define
$\phi(g) := \int_X \alpha(g,\xi) \, d\nu(\xi)$ for $g \in G$. Then, 
\begin{eqnarray*}
\int_G \phi(hg) \, d\nu(h) 
&=& 
\int_G \int_X \alpha(hg,\xi) \, d\mu(h) d\nu(\xi) \\
&=&
\int_G \int_X \alpha(h,\xi) \, d\mu(h) d\nu(\xi) + \int_G \int_X \alpha(g,h\xi) \, d\mu(h) d\nu(\xi) \\
&=& \phi(g) + \ell,
\end{eqnarray*}
for all $g \in G$, where $\ell$ is a constant, i.e. $\phi$ is quasi-left-harmonic with respect to $\mu$. In particular, 
if $\alpha$ is a cocycle representative for a quasimorphism $\phi$ on $(X,\nu)$, we conclude that
\[
\ell_\mu(\phi) = \int_G \int_X \alpha(g,\xi) \, \, d\nu(\xi) d\mu(g).
\]
Moreover, if $\phi$ is square-integrable with respect to $\mu$, a simple argument gives
\[
\sigma^2 = \int_G \int_X ( \alpha(g,\xi) - \ell_\mu(\phi) )^2 \,  d\nu(\xi) d\mu(g),
\] 
where $\sigma^2$ is the variance in Theorem \ref{main}.

\appendix
\section{Strongly affine maps and Mokobodski means} \label{Mokobodzki theory}

This section is devoted to the construction of Mokobodzki means, a special class of invariant means on $\bN$ (or any amenable semigroup),
which are universally measurable and satisfy an analogue of Lebesgue's dominated convergence theorem for uniformly bounded sequences
of measurable functions. We stress that the construction can only be made within certain extensions of ZFC, e.g. under the assumption on the 
Continuum Hypothesis \cite{Mey73} or under the (weaker) assumption of Martin's axiom \cite{Nor76}. It has recently been proved \cite{Lar10} 
that Mokobodzki means do \emph{not} exist if the Continuum Hypothesis is discarded and replaced by the Filter Dichotomy. The exposition given
here is expanded from the classical source \cite{Mey73}.

Let $(Y,\cB)$ be a measurable space. A set $A \subseteq Y$ is \emph{universally measurable} if it is measurable
with respect to every complete finite measure on the $\cB$. If $(Z,\cC)$ is another 
measurable space, we say that a map $f : Y \to Z$ is universally measurable if the pre-image of every 
universally measurable set in $Z$ is universally measurable in $Y$. We denote by $\cB^*$ the $\sigma$-algebra
of all universally measurable sets in $(Y,\cB)$. 

Let $E$ be a locally convex and separable topological vector space, and $K$ a compact and convex metrizable subset of $E$. We denote
by $\cC^+(K)$ the set of convex functions on $K$, which are bounded from below and equal to the limit of a decreasing \emph{sequence}
of convex lower semicontinuous functions, and we define $\cC^{-}(K) := - \cC^+(K)$. We say that a real-valued function $u$ on $K$ is 
\emph{strongly convex} if it is universally measurable, bounded from below, and if for all $x \in K$ and all probability measures $\nu$ 
on $K$ with barycenter $x$, we have
\[
u(x) \leq \int_K u(y) \, d\nu(y).
\]
Similarly, a real-valued function $v$ on $K$ is \emph{strongly concave} if $-v$ is strongly convex, and \emph{strongly affine} if it is both 
strongly convex and concave. 

The goal of this appendix is to prove the following beautiful result by Mokobodzki \cite{Mey73}.
\begin{proposition}[CH] \label{mokobodski}
There is an invariant mean $\gom$ on $\bN$ with the property that whenever $(\Omega,\cB)$ is a measurable space, and 
$(\psi_n)$ a uniformly bounded sequence of $\cB$-measurable functions on $\Omega$, the function
\[
\psi^* := \int_{\bN} \psi_n \, d\gom(n)
\]
is $\cB^{*}$-measurable and moreover, the equality
\[
\int_\Omega \psi^{*} \, d\lambda = \int_{\bN} \Big( \int_\Omega \psi_n \, d\lambda \Big) \, d\gom(n)
\]
holds for every probability measure $\lambda$ on $(\Omega,\cB)$.
\end{proposition}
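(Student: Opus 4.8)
The plan is to recognise the two requirements on $\gom$ as a single condition on the associated function on sequence space, and then to build such a mean by a transfinite recursion of length $\omega_1$ in which CH is used only to enumerate the relevant measures. \emph{Reduction.} Fix $C>0$ and set $K=[-C,C]^{\bN}$, a compact convex metrizable subset of $\R^{\bN}$ on which the coordinate evaluations $e_n(a)=a_n$ are continuous and affine. A mean $\gom$ on $\bN$ restricts to the function $\widehat{\gom}:K\to\R$, $\widehat{\gom}(a)=\int_{\bN}a_n\,d\gom(n)$. Given a sequence $(\psi_n)$ with $|\psi_n|\le C$ on $(\Omega,\cB)$, the map $\Psi:\Omega\to K$, $\Psi(\omega)=(\psi_n(\omega))_n$, is $\cB$-measurable and $\psi^{*}=\widehat{\gom}\circ\Psi$; hence $\psi^{*}$ is $\cB^{*}$-measurable for every such sequence if and only if $\widehat{\gom}$ is universally measurable on every cube $K$. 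For a probability measure $\lambda$ on $\Omega$ the pushforward $\Psi_{*}\lambda$ is a Borel probability on $K$ whose barycenter $b(\Psi_{*}\lambda)$ has $n$-th coordinate $\int_\Omega\psi_n\,d\lambda$; the left-hand side of the claimed identity is $\int_{K}\widehat{\gom}\,d(\Psi_{*}\lambda)$ while the right-hand side is $\widehat{\gom}(b(\Psi_{*}\lambda))$. Since every Borel probability on $K$ arises this way (take $\Omega=K$, $\lambda=\nu$, $\psi_n=e_n$), the asserted interchange, ranging over all $(\Omega,\cB,\lambda)$ and all $(\psi_n)$, is exactly the statement that $\widehat{\gom}(b(\nu))=\int_K\widehat{\gom}\,d\nu$ for every Borel probability $\nu$ on $K$, i.e. that $\widehat{\gom}$ is strongly affine. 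It therefore suffices to construct an \emph{invariant} mean whose restriction to each cube is strongly affine.

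\emph{The calculus of strongly affine functions.} I would first record that strongly affine functions form a vector space containing the continuous affine functions --- in particular each $e_n$ and each Ces\`aro average $\frac1N\sum_{n<N}e_n$ --- and, crucially, that the class is closed under uniformly bounded pointwise limits of \emph{sequences}: if $u_j$ are strongly affine with $|u_j|\le M$ and $u_j\to u$ pointwise, then $u$ is universally measurable as a pointwise limit of such functions, and the barycenter identity for $u$ follows from those for the $u_j$ by dominated convergence. This sequential stability is the engine of the whole argument, and the reason a single sequence does not suffice is precisely that the invariant mean we are after is not a bounded pointwise limit of continuous affine functions.

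\emph{Transfinite construction and the main obstacle.} Under CH the set of Borel probability measures on the cube (Polish, after rescaling to $[-1,1]^{\bN}$) has cardinality $\aleph_1$, so I would enumerate it as $\{\nu_\xi:\xi<\omega_1\}$ and build a transfinite family $(m_\xi)_{\xi<\omega_1}$ of strongly affine, asymptotically shift-invariant means with $m_\xi$ already satisfying the barycenter identity against every $\nu_\eta$ with $\eta<\xi$. Successor steps are handled by a Ces\`aro/Banach averaging of the previous mean, realised as a bounded pointwise limit of a chosen sequence (hence strongly affine) and arranged both to absorb $\nu_\xi$ and to improve invariance under the shift $S$; limit stages, which below $\omega_1$ always have countable cofinality, are handled by taking the bounded pointwise limit along a cofinal $\omega$-sequence, which preserves strong affineness by the stability above. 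The hard part --- and the point at which the set-theoretic hypothesis is unavoidable --- is reconciling the two features in tension: strong affineness survives only \emph{countable} limits, yet one must diagonalise against continuum-many measures while simultaneously forcing shift-invariance, and one must then extract from the $\omega_1$-indexed family a single invariant mean that is strongly affine against \emph{every} measure rather than merely against each one introduced before some stage. This is exactly where Mokobodzki's argument is delicate; I would follow the treatment in \cite{Mey73}, supplying the barycentric estimate at the successor steps and the cofinality bookkeeping at the limit steps. (That no purely ZFC construction can exist is the content of \cite{Lar10}.)
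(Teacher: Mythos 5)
Your reduction is sound and is essentially the one the paper uses: the two assertions of the proposition are exactly the statement that the function $\widehat{\gom}(x)=\int_{\bN}x_n\,d\gom(n)$ on the cube $K$ is universally measurable and satisfies the barycenter identity against every Borel probability on $K$, i.e.\ is strongly affine, and shift-invariance is forced by sandwiching between the $\liminf$ and $\limsup$ of Ces\`aro averages. Up to that point you agree with the paper.

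The construction itself, however, is a genuine gap, and in two distinct ways. First, your successor step (``Ces\`aro/Banach averaging\dots arranged to absorb $\nu_\xi$'') is not a proof: the actual mechanism for absorbing a single measure is a concrete lemma (Lemma \ref{mokobodski help lemma} in the paper) which, given $u\in\cC^-(K)$, $v\in\cC^+(K)$ with $u\le v$ and a measure $\nu$, produces a narrower sandwich $u\le u'\le v'\le v$ with $u'=v'$ $\nu$-a.e.; this requires Hahn--Banach to produce continuous affine separators, weak compactness of the nested sets $\overline{L}_n$ in $L^1(K,\nu)$, Banach--Saks, and Borel--Cantelli. None of this is supplied or replaced by anything in your sketch. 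Second, and more importantly, the architecture you propose --- maintaining a \emph{strongly affine} mean $m_\xi$ at every stage and then trying to extract a single strongly affine limit from an $\omega_1$-indexed family --- runs into exactly the obstruction you yourself flag (strong affineness survives only countable limits), and you resolve it only by deferring to \cite{Mey73}. But that deferral is the whole point: Mokobodzki's argument never asks strong affineness to survive an uncountable limit. Instead one carries along the transfinite recursion a decreasing family $(v_\alpha)\subseteq\cC^+(K)$ and an increasing family $(u_\alpha)\subseteq\cC^-(K)$ with $u\le u_\alpha\le v_\alpha\le v$ and $u_\alpha=v_\alpha$ $\nu_\alpha$-a.e.; these classes \emph{are} stable under the countable monotone limits occurring at each stage below $\omega_1$. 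The final object $w=\inf_\alpha v_\alpha=\sup_\alpha u_\alpha$ is then shown to be universally measurable and strongly affine \emph{a posteriori}, one measure at a time: for a given $\nu$ with barycenter $x$ one chooses $\alpha$ with $\nu_\alpha=(\nu+\delta_x)/2$, so that $w$ coincides both at $x$ and $\nu$-a.e.\ with the convex function $v_\alpha$ and with the concave function $u_\alpha$, and the two one-sided Jensen inequalities collapse to the barycenter identity. This sandwich-and-collapse device is the missing idea; without it (or an equivalent substitute) your outline does not close.
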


We need the following two lemmata. 

\begin{lemma} \label{mokobodski help lemma}
Suppose that $u \in \cC^{-}(K)$ and $v \in \cC^+(K)$ with $u \leq v$ on $K$ are given. Then, for any probability measure 
$\nu$ on $K$, there are $u' \in \cC^{-}(K)$ and $v' \in \cC^{+}(K)$ such that the inequalities $u \leq u' \leq v' \leq v$ hold 
and the equality $u' = v'$ holds almost everywhere on $K$ with respect to $\nu$.
\end{lemma}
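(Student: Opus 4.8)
The plan is to recast the statement as an extremal problem and solve it by a monotone iteration. Since I only need $u'=v'$ on a $\nu$-conull set for the single measure $\nu$, I can work entirely with the functional $J(u',v'):=\int_K(v'-u')\,d\nu\ge 0$, minimized over the admissible pairs, i.e. those with $u\le u'\le v'\le v$, $u'\in\cC^-(K)$ and $v'\in\cC^+(K)$; the goal is to produce an admissible pair with $J=0$. For a bounded function $w$ on $K$ I write $\check w$ for its lower convex envelope and $\hat w$ for its upper concave envelope, which by the barycentric formulas are $\check w(x)=\inf_{\lambda\in M_x}\int_K w\,d\lambda$ and $\hat w(x)=\sup_{\lambda\in M_x}\int_K w\,d\lambda$, where $M_x$ is the set of probability measures on $K$ with barycentre $x$. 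First I would record the closure properties that will keep the construction inside the two classes in the monotone directions used below: $\cC^+(K)$ is stable under finite maxima and decreasing sequential limits, and dually $\cC^-(K)$ is stable under finite minima and increasing sequential limits, because a decreasing (resp. increasing) limit of decreasing-limits (resp. increasing-limits) of semicontinuous functions is again of the same type by a diagonal argument.

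The engine is a single improving move. Given an admissible pair with $\nu(\{u'<v'\})>0$, set $g=\tfrac12(u'+v')$, so $u'\le g\le v'$ with strict inequalities exactly on $\{u'<v'\}$. Raising $u'$ to $\hat g$ or lowering $v'$ to $\check g$ each strictly decreases $J$, since $\hat g\ge g$ and $\check g\le g$ with strict separation on the positive-measure set $\{u'<v'\}$. The difficulty — which I expect to be the main obstacle — is that near the contact set $\{u'=v'\}$ the envelope $\hat g$ may rise above $v'$ and $\check g$ may dip below $u'$, so that neither replacement is literally admissible; moreover the naive repair (truncating against the opposite bound and re-enveloping) runs in the \emph{wrong} monotone direction for the sequential definition of the classes, so one cannot casually pass to a limit. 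Controlling this is the heart of Mokobodzki's argument: the right tool is the barycentric description of the envelopes combined with Jensen's inequality for the concave $u'$ and the convex $v'$, which bounds the possible overshoot of $\hat g$ above $v'$ and undershoot of $\check g$ below $u'$ in terms of the local strict convexity/concavity of the two functions, and thereby shows that a genuine one-sided decrease of $J$ (in the closure-friendly direction) always survives off the contact set.

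Granting the improving move, I would run it so that each step is purely one-sided and realises a fixed proportion of the best available decrease of $J$; this produces iterates with $u_n\nearrow$ in $\cC^-(K)$ and $v_n\searrow$ in $\cC^+(K)$, both trapped between $u$ and $v$. By the closure properties the limits $u_\infty=\lim_n u_n\in\cC^-(K)$ and $v_\infty=\lim_n v_n\in\cC^+(K)$ exist and satisfy $u\le u_\infty\le v_\infty\le v$, and by bounded convergence $J(u_\infty,v_\infty)=\lim_n J(u_n,v_n)$. Were this limit positive, then $\nu(\{u_\infty<v_\infty\})>0$ and the improving move would apply to $(u_\infty,v_\infty)$, yielding a further definite decrease and contradicting the greedy choice; hence $J(u_\infty,v_\infty)=0$. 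Since $v_\infty-u_\infty\ge 0$ has vanishing $\nu$-integral, this forces $u_\infty=v_\infty$ $\nu$-almost everywhere, and $(u',v'):=(u_\infty,v_\infty)$ is the desired pair. In summary, the closure properties and the passage to the limit are routine, whereas the essential obstacle is showing that $J$ can always be strictly reduced near the contact set without breaking the sandwich or leaving the sequentially defined classes $\cC^{\pm}(K)$.
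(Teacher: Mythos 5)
Your strategy is genuinely different from the paper's, but it has a gap at exactly the point you yourself identify as ``the heart of Mokobodzki's argument'': the improving move is never actually established. As you note, for $g=\tfrac12(u'+v')$ the upper concave envelope of $g$ need not lie below the convex function $v'$, and the lower convex envelope need not lie above the concave $u'$, so neither replacement preserves the sandwich $u\le u'\le v'\le v$; the appeal to ``Jensen's inequality bounding the overshoot'' is a gesture, not an argument, and it is moreover unclear that these envelopes belong to $\cC^{-}(K)$ or $\cC^{+}(K)$ at all, since those classes are defined by monotone \emph{sequential} limits of semicontinuous convex or concave functions and envelope operations do not obviously preserve them. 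A second problem is the closure property on which your passage to the limit rests: the diagonal argument you sketch for stability of $\cC^{+}(K)$ under decreasing sequential limits produces finite \emph{minima} of convex lower semicontinuous functions, which are lower semicontinuous but not convex, so the claimed stability is not proved (finite maxima are fine; decreasing limits are the delicate case, and that is the one your iteration needs). Finally, even granting both, the contradiction at the end requires the decrease available at the limit pair to be seen by the greedy sequence, i.e.\ some semicontinuity of the ``best available decrease,'' which is not addressed. Since the whole scheme rests on these unproven steps, the argument as written does not close.

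The paper avoids all of these difficulties by a compactness argument rather than an iteration. It writes $u=\sup_n u_n$ and $v=\inf_n v_n$ with $u_n$ concave upper semicontinuous and $v_n$ convex lower semicontinuous, shifts these by $\mp 1/n$ to create a uniform gap, applies Hahn--Banach to obtain a nonempty set $L_n$ of continuous affine functions sandwiched in between, and uses weak compactness of the nested sets $\overline{L}_n$ in $L^1(K,\nu)$ together with the Banach--Saks theorem and Borel--Cantelli to extract a sequence $(w_k)$ of continuous affine functions with $w_k\in L_{n_k}$ converging $\nu$-almost everywhere. Setting $u'=\liminf_k w_k$ and $v'=\limsup_k w_k$ then lands automatically in $\cC^{-}(K)$ and $\cC^{+}(K)$ (a $\limsup$ of continuous affine functions is a decreasing limit of countable suprema of affine functions, hence of convex lower semicontinuous functions), and the sandwich follows from $u_{n_k}-1/n_k\le w_k\le v_{n_k}+1/n_k$. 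If you wish to salvage the variational approach you would need to supply an admissible, quantitatively improving move and a genuine proof of the stability of the classes under the monotone limits you use; both are nontrivial, and the separation-plus-compactness route sidesteps them entirely.
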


\begin{proof}
By the definition of the class $\cC^{+}(K)$, there is is a decreasing sequence $(v_n)$ of convex lower semicontinuous functions,
which we can assume is bounded from below, such that $v = \inf_n v_n$. Similarly, there is an increasing sequence $(u_n)$ of
concave upper semicontinuous functions, bounded from above, such that $u = \sup_n u_n$.  Define 
\[
u_n' = u_n - \frac{1}{n} \quad \textrm{and} \quad v_n' = v_n + \frac{1}{n}, 
\]
and note that by the Hahn-Banach theorem, the set $L_n$ of continuous and affine maps on $K$ which are sandwiched 
between $u_n'$ and $v_n'$ is non-empty for all $n$. Since both $u_n'$ and $v_n'$ 
are bounded functions on $K$, the weak closure $\overline{L}_n$ of $L_n$ in $L^1(K,\nu)$ is compact for all $n$. Moreover, the 
family $(L_n)$ is a nested sequence of pre-compact and convex subsets of $L^1(K,\nu)$, so we conclude that the countable intersection 
$\cap_n \overline{L}_n$ contains at least one element, which we denote by $w'$. By the Banach-Saks theorem, we can choose a sequence 
$(w_k)$ with $w_k \in \overline{L}_{n_k}$, for some sequence $(n_k)$, such that $w_k \to w'$ in the norm topology of $L^1(K,\mu)$ 
and moreover, $\| w_k - w' \|_1 \leq 2^{-k}$. By a standard application of Borel-Cantelli's lemma, the sequence $(w_k)$ converges 
to $w'$ on a conull subset $K'$ of $K$, so we may define the functions
\[
u' = \liminf_k w_k \quad \textrm{and} \quad v' = \limsup_k w_k.
\]
By construction, we have the inequalities $u \leq u' \leq v' \leq v$ on $K$, and $u' = v'$ on $K'$. 
\end{proof}

The second lemma uses the Continuum Hypothesis. Before we state and prove the lemma, we recall some basic notions from set theory. 
The set of \emph{countable ordinals} $(\Omega,<)$ is the unique (up to order isomorphism)  well-ordered uncountable set with the property that 
the sets
\[
\Omega_x := \{ y \in \Omega \, |\, y < x \}
\]
are countable, for all $x \in \Omega$. We stress that the construction of $(\Omega,<)$ only depends on the Axiom of Choice. 
The cardinality of $\Omega$ is $\aleph_1$, so by the Continuum Hypothesis there is a set-theoretic bijection between $\Omega$
and the real line. 

\begin{lemma}[CH] \label{mokobodski key lemma}
Suppose that $u \in \cC^{-}(K)$ and $v \in \cC^+(K)$ with $u \leq v$ on $K$ are given. Then there is a strongly affine map $w$ 
on $K$ which satisfies $u \leq w \leq v$.
\end{lemma}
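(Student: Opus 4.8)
The plan is to run a transfinite recursion over \emph{all} probability measures on $K$, using Lemma \ref{mokobodski help lemma} at each successor step to remove the discrepancy between an upper and a lower function on one more measure, and to exploit the Continuum Hypothesis twice: once to index the measures by countable ordinals, and a second time through the resulting countability of initial segments.

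First I would invoke (CH) to enumerate the set of all Borel probability measures on the compact metrizable space $K$ (a set of cardinality $2^{\aleph_0}$, equal to $\aleph_1$ under (CH)) as $(\nu_\xi)_{\xi < \Omega}$, indexed by the countable ordinals $(\Omega,<)$ introduced above, so that each initial segment $\Omega_\xi$ is countable. I then construct by transfinite recursion an increasing family $(u_\xi)_{\xi<\Omega}$ in $\cC^-(K)$ and a decreasing family $(v_\xi)_{\xi<\Omega}$ in $\cC^+(K)$ with $u \le u_\xi \le v_\xi \le v$ throughout. Set $u_0=u$, $v_0=v$. At a successor $\xi+1$, apply Lemma \ref{mokobodski help lemma} to the pair $u_\xi\le v_\xi$ and the measure $\nu_\xi$ to obtain $u_\xi\le u_{\xi+1}\le v_{\xi+1}\le v_\xi$ with $u_{\xi+1}=v_{\xi+1}$ holding $\nu_\xi$-almost everywhere. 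At a limit ordinal $\lambda$, set $u_\lambda=\sup_{\xi<\lambda}u_\xi$ and $v_\lambda=\inf_{\xi<\lambda}v_\xi$; since $\Omega_\lambda$ is countable these are countable monotone limits, and one checks that $\cC^-(K)$ and $\cC^+(K)$ are stable under bounded countable increasing, resp. decreasing, limits — this is precisely where the sequential definition of these classes is essential — so $u_\lambda\in\cC^-(K)$, $v_\lambda\in\cC^+(K)$, and $u_\lambda\le v_\lambda$ is inherited from $u_\xi\le v_\eta$ for all $\xi,\eta<\lambda$.

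Having run the recursion through all of $\Omega$, I would define $w:=\sup_{\xi<\Omega}u_\xi$. Monotonicity gives $u_\xi\le v_\eta$ for all $\xi,\eta$, whence $u\le w\le\inf_\eta v_\eta\le v$, so $w$ is real-valued and the boundedness demanded of a strongly affine function is inherited from the sandwich. Now every convex lower semicontinuous function, and hence by monotone convergence every element of $\cC^+(K)$, satisfies Jensen's inequality $f(x)\le\int_K f\,d\nu$ whenever $\nu$ has barycenter $x$, while elements of $\cC^-(K)$ satisfy the reverse inequality. Given any $\nu$ with barycenter $x$, write $\nu=\nu_\xi$; then $u_{\xi+1}(x)\ge\int_K u_{\xi+1}\,d\nu_\xi=\int_K v_{\xi+1}\,d\nu_\xi\ge v_{\xi+1}(x)$, which together with $u_{\xi+1}(x)\le v_{\xi+1}(x)$ forces all four quantities to coincide and, by the squeeze $u_{\xi+1}\le w\le v_{\xi+1}$, to equal $w(x)$. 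Moreover $w=u_{\xi+1}$ $\nu_\xi$-almost everywhere, so $\int_K w\,d\nu_\xi=w(x)$. As every probability measure is some $\nu_\xi$, this establishes the barycentric identity $w(x)=\int_K w\,d\nu$ for all $x$ and all $\nu$ with barycenter $x$, i.e. $w$ is both strongly convex and strongly concave.

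The remaining, and in my view decisive, point is the universal measurability of $w$, which a priori is an \emph{uncountable} supremum of Borel functions. Here the enumeration pays off a second time: given any complete finite measure $\lambda$ on the Borel $\sigma$-algebra, I normalize it to a probability measure and write it as $\nu_\beta$ for some $\beta<\Omega$. At the step $\beta+1$ one has $u_{\beta+1}=v_{\beta+1}$ $\lambda$-almost everywhere, while $u_{\beta+1}\le w\le v_{\beta+1}$ holds everywhere; hence $w$ agrees $\lambda$-almost everywhere with the Borel function $u_{\beta+1}$ and is therefore $\lambda$-measurable. Since $\lambda$ was arbitrary, $w$ is universally measurable, completing the verification that $w$ is strongly affine. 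I expect the main obstacle to be exactly this measurability issue: it is the reason the bare Axiom of Choice does not suffice and (CH) — forcing every measure to be processed at a countable stage — must be brought in, with the stability of $\cC^\pm(K)$ under countable monotone limits at limit ordinals as the chief supporting technicality.
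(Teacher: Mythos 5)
Your proof follows the paper's argument essentially verbatim: the same (CH)-enumeration of $M^1(K)$ by the countable ordinals, the same transfinite recursion through Lemma \ref{mokobodski help lemma}, the same definition $w=\sup_\xi u_\xi=\inf_\xi v_\xi$, and the same verification of universal measurability via the $\nu_\beta$-a.e.\ agreement of $w$ with a Borel function. The only deviations are harmless: you check the barycenter identity by applying Jensen's inequality directly to $\nu=\nu_\xi$ at stage $\xi+1$ (the paper instead locates the auxiliary measure $(\nu+\delta_x)/2$ in the enumeration so that $x$ becomes an atom; both routes are valid), and you make the limit-ordinal step explicit, where the required stability of $\cC^{\pm}(K)$ under bounded countable monotone limits is asserted rather than proved --- the paper leaves this same point entirely implicit.
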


\begin{proof}
Since the set of all probability measures on $K$ has the same cardinality as the real line (unless $K$ consists of one point, in which case
the lemma is trivial), we can index this set by the countable ordinals, i.e. $M^1(K) = (\nu_\alpha)_{\alpha \in \Omega}$. By a transfinite 
recursion of Lemma \ref{mokobodski help lemma}, we produce a decreasing sequence $(v_\alpha)$ of elements in $\cC^+(K)$, and an
increasing family $(u_\alpha)$ of elements in $\cC^{-}(K)$, such that
\[
u \leq u_\alpha \leq v_\alpha \leq v,
\]
and $u_\alpha = v_\alpha$ $\nu_\alpha$-almost everywhere. Since all probability measures on $K$ occurs in the enumeration, we have
$\inf_\alpha v_\alpha = \sup_\alpha u_\alpha$. We denote this function by $w$. As we have $w = u_\alpha = v_\alpha$ on a 
$\nu_\alpha$-conull subset of $K$, the function $w$ is universally measurable, and it is clearly bounded. 

The barycenter property is easily verified using the following simple trick. Let $\nu$ be a probability measure on $K$ with barycenter $x$. 
Then there is a countable ordinal $\alpha$ with $\nu_\alpha = ( \nu + \delta_x) / 2$. As $w$ is $\nu_\alpha$-almost everywhere equal to
$u_\alpha$, which is an upper semicontinuous concave function, and also to $v_\alpha$, which is a lower semicontinuous convex function, 
we have $w(x) = \int_K w(y) \, d\nu(y)$, and hence $w$ is strongly affine.
\end{proof}

We can now prove Proposition \ref{mokobodski}.

\begin{proof}[Proof of Proposition \ref{mokobodski}]
Let $K$ denote the unit ball in $\ell^{\infty}(\bN)$. We can without loss of generality assume that 
$|\psi_n| \leq 1$ for all $n$, so that the map $\tilde{\psi} := (\psi_n)$ is well-defined and $\cB$-measurable 
as a map of $\Omega$ into $K$. The functions
\[
u(x) := \liminf_n \, \frac{1}{n} \sum_{k=1}^{n} x_k \quad \textrm{and} \quad v(x) := \limsup_n \, \frac{1}{n} \sum_{k=1}^{n}  x_k
\]
clearly belongs to the classes $\cC^-(K)$ and $\cC^+(K)$ respectively, so by Lemma \ref{mokobodski key lemma},
there is a strongly affine and shift-invariant map $\gom$ on $K$ which is sandwiched by $u$ and $v$. Hence, the 
composition $\psi^* := \int_{\bN} \psi_n \, d\gom(n)$ is universally measurable, which proves the first part of the proposition. 

Moreover, if $\lambda$ is a probability measure on $\Omega$, then the push-forward $\widetilde{\psi}_*\lambda$ is 
a probability measure on $K$, so by the defining property of strongly affine maps, we have
\[
\int_K \Big( \int_{\bN} x_n \, d\gom(n) \Big) \, d\widetilde{\psi}_*\lambda(x) = \int_{\bN} \Big( \int_K x_n \, d\widetilde{\psi}_*\lambda(x) \Big) \, d\gom(n),
\]
which exactly translates to the equality asserted in the proposition. 
\end{proof}


\begin{thebibliography}{widest entry}
\bibitem[Aze70]{Aze70}
Azencott, R., \emph{Espaces de Poisson des groupes localement compacts.} Lecture Notes in Math. 148. Springer-Verlag, Berlin, 1970.
\bibitem[Bil61]{Bil61}
Billingsley, P.,
\emph{The Lindeberg-L\'evy theorem for martingales.} 
Proc. Amer. Math. Soc. 12 (1961), 788--792.
\bibitem[BuMo99]{BuMo}
Burger, M. and Monod, N.,
\emph{Bounded cohomology of lattices in higher rank Lie groups.} 
J.~Eur. Math. Soc. 1 No. 2 (1999), 199--235.
\bibitem[Cal09]{scl}
Calegari, D.,  {\em scl.},  {\em MSJ Memoirs} 20. Mathematical Society of Japan, Tokyo, 2009.
\bibitem[CaFu10]{CaFu10}
Calegari, D. and Fujiwara, K.,
\emph{Combable functions, quasimorphisms, and the central limit theorem.}
Ergodic Theory Dynam. Systems 30 No. 5 (2010), 1343--1369,
\bibitem[CaMa10]{CaMa10}
Calegari, D. and Maher, J.,
\emph{Statistics and compression of scl.}
Preprint. 2010. 
\bibitem[Fur63]{Fur63}
Furstenberg, H.,
\emph{Noncommuting random products.}
Trans. Amer. Math. Soc. 108 (1963), 377Ð428
\bibitem[Hor08]{Hor08}
Horsham, M.,
\emph{Central limit theorems for quasimorphisms on surface groups.}
Doctoral thesis, University of Manchester, 2008. 
\bibitem[HoSh09]{HoSh09}
Horsham, M. and Sharp, R.,
\emph{Lengths, quasi-morphisms and statistics for free groups.}
Spectral analysis in geometry and number theory, 219--237, 
Contemp. Math. 484, Amer. Math. Soc., Providence, RI, 2009. 
\bibitem[KaVe83]{KaimVer}
Ka{\u\i}manovich, V.~A.  and Vershik, A.~M. 
\emph{Random walks on discrete groups: boundary and entropy.}
Ann. Probab. 11 No. 3 (1983), 457--490.
\bibitem[Kai92]{Kai92}
Ka{\u\i}manovich, V. A., 
\emph{Bi-harmonic functions on groups.} 
C. R. Acad. Sci. Paris S\'er. I Math. 314 No. 4 (1992), 259--264.
\bibitem[Kai03]{Kai03}
Ka{\u\i}manovich, V. A. 
\emph{Double ergodicity of the Poisson boundary and applications to bounded cohomology.} 
Geom. Funct. Anal. 13 (2003), no. 4, 852--861.
\bibitem[Kom67]{Kom67}
Koml\'os, J.,
\emph{A generalization of a problem of Steinhaus.}
Acta Math. Hungar. 18 (1967), 217--229.
\bibitem[Lar10]{Lar10}
Larson, P.B.,
\emph{The Filter Dichotomy and medial limits.}
Preprint. 2010.
\bibitem[Mey73]{Mey73}
Meyer, P. A.,
\emph{Limites m\'ediales, d'apr\'es Mokobodzki.} 
S\'eminaire de Probabilit\'es, VII (Univ. Strasbourg, ann\'ee universitaire 1971--1972), 
Lecture Notes in Math., Vol. 321, 198--204. Springer, Berlin, 1973. 
\bibitem[Nor76]{Nor76}
Normann, D., 
\emph{Martin's axiom and medial functions.}
Math. Scand. 38 (1976), 167--176. 
\bibitem[Pic97]{Pic97}
Picaud, J.-C.,
\emph{Cohomologie born\'ee des surfaces et courants g\'eodesiques.}
Bull. Soc. France, 125 (1997), 115 -- 142.
\bibitem[Sto70]{Sto70}
Stout, W. 
\emph{The Hartman-Wintner law of the iterated logarithm for martingales.}
Ann. Math. Stat. 41, No. 6 (1970) 2158-2160. 
\end{thebibliography}
\end{document}